\numberwithin{equation}{section}
\newcommand{\R}{\mathbb R}
\newcommand{\C}{\mathbb C}
\newcommand{\N}{\mathbb N}
\newcommand{\Z}{\mathbb Z}
\newcommand{\Zs}{\Z \setminus \{ 0\}}
\newcommand{\T}{\mathbb T}
\newcommand{\cH}{\mathcal H}
\newcommand{\be}{\begin{equation}}
\newcommand{\ee}{\end{equation}}
\newcommand{\ba}{\begin{eqnarray}}
\newcommand{\ea}{\end{eqnarray}}
\newcommand{\lan}{\langle}
\newcommand{\ran}{\rangle}
\newtheorem{theorem}{Theorem}[section]
\newtheorem{proposition}[theorem]{Proposition}
\newtheorem{lemma}[theorem]{Lemma}
\newtheorem{corollary}[theorem]{Corollary}
\newtheorem{definition}[theorem]{Definition}
\begin{document}

\title[Control of the wave equation with structural damping]{Null controllability of the structurally damped wave equation with moving point control}
\author{Philippe Martin}
\address{Centre Automatique et Syst\`emes\\
Mines ParisTech\\
60 boulevard Saint-Michel\\
75272 Paris Cedex, France }
\email{philippe.martin@mines-paristech.fr}

\author{Lionel Rosier}
\address{Institut Elie Cartan, UMR 7502 UHP/CNRS/INRIA,
B.P. 70239, 54506 Vand\oe uvre-l\`es-Nancy Cedex, France}
\email{rosier@iecn.u-nancy}

\author{Pierre Rouchon}
\address{Centre Automatique et Syst\`emes\\
Mines ParisTech\\
60 boulevard Saint-Michel\\
75272 Paris Cedex, France }
\email{pierre.rouchon@mines-paristech.fr}

\keywords{Structural damping; wave equation; null controllability; Benjamin-Bona-Mahony equation; Korteweg-de Vries equation;
 biorthogonal sequence; multiplier; sine-type function}

\subjclass{}

\begin{abstract}
We investigate the internal controllability of the wave equation with structural damping
on the one dimensional torus. We assume that the control is acting on a moving point  or on a moving small interval with a constant velocity.
  We prove that the null controllability holds in some suitable Sobolev space  and after a fixed positive time independent of the initial conditions.
\end{abstract}

\maketitle
\section{Introduction}

In this paper we consider the wave equation with {\em structural damping}\footnote{The terminology {\em internal damping} is also used by some authors.}
\be
y_{tt}-y_{xx}-\varepsilon y_{txx}=0
\label{A1}
\ee
where $t$ is time, $x\in \T=\R / (2\pi \Z)$ is the space variable, and $\varepsilon$ is a small positive parameter
corresponding to the strength of the structural damping. That equation has been proposed in \cite{PSM}
as an alternative model for the classical spring-mass-damper PDE.
We are interested in the control properties of \eqref{A1}. The exact controllability of \eqref{A1} with an internal control function supported
in the {\em whole} domain  was studied in \cite{LT,Leugering}. With a boundary control, it was proved in \cite{RR} that \eqref{A1} is not spectrally controllable
(hence not null controllable), but that some approximate controllability may be obtained in some appropriate functional space.

The bad control properties from \eqref{A1} come from the existence of a finite accumulation point in the spectrum. Such a phenomenon was noticed
first by D. Russell in \cite{Russell} for the beam equation with internal damping, by G. Leugering and E. J. P. G. Schmidt in \cite{LS} for the plate equation with internal
damping, and by S. Micu in \cite{Micu} for the linearized Benjamin-Bona-Mahony (BBM) equation
\be
\label{A2}
y_t + y_x -y_{txx}=0.
\ee
Even if the BBM equation arises in a quite different physical context, its control properties share important common features with \eqref{A1}. Remind first that the full BBM equation
\be
\label{A3}
y_t + y_x -y_{txx} + yy_x=0
\ee
is a popular alternative to the Korteweg-de Vries (KdV) equation
\be
\label{A4}
y_t+y_x+y_{xxx}+yy_x=0
\ee
as a model for the propagation of unidirectional small amplitude long water waves in a uniform channel. \eqref{A3} is often obtained from \eqref{A4} in the derivation
of the surface equation by noticing that, in the considered regime, $y_x\sim -y_t$, so that $y_{xxx}\sim -y_{txx}$.  The dispersive term $-y_{txx}$ has a strong smoothing effect,
thanks to which the wellposedness theory of \eqref{A3} is dramatically easier than for \eqref{A4}. On the other hand, the control properties of \eqref{A2} or \eqref{A3}
are very bad (compared to those of \eqref{A4}, see \cite{RZ2009})  precisely because of that term.
It is by now classical that
an ``intermediate'' equation between \eqref{A3} and \eqref{A4} can be derived from \eqref{A3} by working in a moving frame
$x=ct$, $c\in \R$. Indeed, letting
\be
\label{A5}
z(x,t)=y(x-ct,t)
\ee
we readily see that \eqref{A3} is transformed into the following KdV-BBM equation
\be
\label{A6}
z_t+(c+1)z_x-cz_{xxx}-z_{txx}+zz_x=0.
\ee
It is then reasonable to expect the control properties of \eqref{A6} to be better than those of \eqref{A3}, thanks to the KdV term $-cz_{xxx}$ in \eqref{A6}.
In \cite{RZpreprint}, it was proved that the equation \eqref{A6} with a forcing term supported in (any given) subdomain is locally exactly controllable in $H^1(\T)$
provided that $T>(2\pi)/c$. Going back to the original variables, it means that the equation
\be
\label{A7}
y_t + y_x -y_{txx} + yy_x = b(x+ct) h(x,t)
\ee
with a moving distributed control is exactly controllable in $H^1(\T )$ in (sufficiently) large time. Actually, this control time has to be chosen in such a way that
the support of the control, which is moving at the constant velocity $c$,  can visit all the domain $\T$.

The concept of  moving point control was introduced by J. L. Lions in \cite{Lions} for the wave equation. One important motivation for this kind of control is that the exact controllability
of the wave equation with a pointwise control and Dirichlet boundary conditions fails if the point is a zero of some eigenfunction of the Dirichlet Laplacian, while it holds
 when the point is moving under some (much more stable) conditions easy to check
(see e.g. \cite{Castro}).
The controllability of the wave equation (resp. of the heat equation) with a moving point control was investigated in \cite{Lions,Khapalov1,Castro} (resp. in \cite{Khapalov2,CZ2}).
See also \cite{Zhang} for Maxwell's equations.

As the bad control properties of \eqref{A1} come from the BBM term $-\varepsilon y_{txx}$, it is natural to ask whether better control properties for \eqref{A1}
could be obtained by using a moving control, as for the BBM equation in \cite{RZpreprint}. The aim of this paper is to investigate that issue.

Throughout the paper, we will take $\varepsilon =1$ for the sake of simplicity. All the results can be extended without difficulty to any $\varepsilon >0$.
Let $y$ solve
\be
\label{A8}
y_{tt} - y_{xx} -y_{txx} = b(x+ct) h(x,t).
\ee
Then $v(x,t)=y(x-ct,t)$ fulfills
\be
\label{A9}
v_{tt} + (c^2-1) v_{xx} +2c v_{xt} -v_{txx} -c v_{xxx} = b(x) \tilde h (x,t)
\ee
where $\tilde h( x,t) =h(x-ct, t)$.
Furthermore the new initial condition read
\be
\label{condini}
v(x,0)=y(x,0),\quad v_t (x,0)= - c\, y_x (x,0)+y_t(x,0).
\ee

As for the KdV-BBM equation, the appearance of a KdV term (namely $-cv_{xxx}$ in \eqref{A9}) results in much better control properties.
We shall see that
\begin{enumerate}
\item[(i)] there is no accumulation point in the spectrum of the free evolution equation ($\tilde h=0$ in \eqref{A9});
\item[(ii)]  the spectrum splits into one part of ``parabolic'' type, and another part of ``hyperbolic'' type.
\end{enumerate}
It follows that  one can expect at most a null controllability result in large time. We will see that this is indeed the case.
Throughout the paper, we assume that $c=-1$ for the sake of simplicity.
Let us now state the main results of the paper.
We shall denote by $(y_0,\xi _0)$ an initial condition (taken in some appropriate space) decomposed in Fourier series as
\be
y_0(x)=\sum_{k\in \Z} c_k e^{ikx}, \quad \xi _0(x) =\sum_{k\in \Z} d_k e^{ikx}.
\label{Fourier}
\ee
We shall consider several control problems. The first one reads
\ba
&&y_{tt}-y_{xx}-y_{txx} = b(x-t)h(t), \qquad x\in \T, t>0,\label{cont1}\\
&&y(x,0)=y_0(x),\ y_t(x,0)=\xi _0 (x),\qquad x\in \T \label{cont1bis}
\ea
where $h$ is the scalar control.
\begin{theorem}
\label{thm1}
Let $b\in L^2(\T )$ be such that
\[
\beta _k=\int_{\T} b(x) e^{-ikx} dx\ne 0\quad \text{ for }  k\ne 0,\quad \beta _0 =\int_{\T } b(x)\, dx =0.
\]
For any time $T>2\pi$ and any $(y_0,\xi _0)\in L^2(\T )^2$ decomposed as in \eqref{Fourier}, if
\be
\label{condthm1}
\sum_{k\ne 0} |\beta _k|^{-1} (|k|^6 |c_k| + |k|^4 |d_k| ) <\infty \quad \text{ and } c_0  = d_0 =0,
\ee
then there exists a control $h\in L^2(0,T)$ such that the solution of \eqref{cont1}-\eqref{cont1bis} satisfies
$y(.,T)=y_t(.,T) = 0$.
\end{theorem}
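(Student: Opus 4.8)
The plan is to pass to the frame travelling with the control, where the spatial Fourier modes decouple into scalar ODEs whose spectrum has no finite accumulation point, to recast the terminal condition as a moment problem for $h$, and to solve that problem by pairing a biorthogonal sequence against the moment data; the summability hypothesis \eqref{condthm1} is exactly what makes the resulting series converge in $L^2(0,T)$. Concretely, following \eqref{A8}--\eqref{condini} with $c=-1$, I set $v(x,t)=y(x+t,t)$, so that $v$ solves \eqref{A9}, namely $v_{tt}-2v_{xt}-v_{txx}+v_{xxx}=b(x)h(t)$, with $v(\cdot,0)=y_0$ and $v_t(\cdot,0)=\xi_0+y_0'$, and one checks that $v(\cdot,T)=v_t(\cdot,T)=0$ if and only if $y(\cdot,T)=y_t(\cdot,T)=0$. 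Expanding $v=\sum_{k\in\Z}v_k(t)e^{ikx}$, mode $k$ obeys $\ddot v_k+(k^2-2ik)\dot v_k-ik^3v_k=\beta_kh(t)$. For $k=0$, since $\beta_0=0$ one gets $v_0(t)=c_0+d_0t$, annihilated together with its derivative at $t=T$ exactly because $c_0=d_0=0$. For $k\neq0$ the characteristic roots are $\lambda_k^\pm=\tfrac12\bigl(-k^2+2ik\pm|k|\sqrt{k^2-4}\bigr)$; they are simple for $|k|\ge3$, satisfy $\mathrm{Re}\,\lambda_k^\pm<0$ for every $k\neq0$, and behave like $\lambda_k^+=ik-1+O(|k|^{-1})$ (the ``hyperbolic'' branch) and $\lambda_k^-=-k^2+ik+1+O(|k|^{-1})$ (the ``parabolic'' branch). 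Hence the spectrum has no finite accumulation point, its gaps are bounded below, and along the hyperbolic branch the imaginary parts are (asymptotically) the integers, spaced by $1$ --- which is what makes the threshold $T>2\pi$ appear. The finitely many modes $|k|\le2$, where the two roots may coincide or be non-real, are treated separately and cause no difficulty.

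I then reduce to a moment problem. Writing mode $k$ as a first-order system $\dot V_k=M_kV_k+\beta_k h\,e_2$ with $V_k=(v_k,\dot v_k)^{\!\top}$, $e_2=(0,1)^{\!\top}$ and $M_k$ the companion matrix of $\lambda^2+(k^2-2ik)\lambda-ik^3$, and using the variation-of-constants formula, $V_k(T)=0$ is equivalent (for $|k|\ne2$, since the left eigenvectors $\ell_k^\pm=(ik^3/\lambda_k^\pm,\,1)$ span the dual plane) to
\be
\int_0^T h(t)\,e^{-\lambda_k^s t}\,dt=m_k^s\quad(k\neq0,\ s\in\{+,-\}),\qquad m_k^s:=-\beta_k^{-1}\,\ell_k^s\!\cdot\!V_k(0).
\label{momentplan}
\ee
Since $V_k(0)=(c_k,\,d_k+ikc_k)^{\!\top}$, the asymptotics of $\lambda_k^\pm$ yield (after the $ikc_k$ terms cancel) $|m_k^+|\le C|\beta_k|^{-1}(|k|^2|c_k|+|d_k|)$ and $|m_k^-|\le C|\beta_k|^{-1}(|c_k|+|d_k|)$. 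It now suffices to build a family $\{q_k^s\}_{k\neq0,\,s\in\{+,-\}}\subset L^2(0,T)$ biorthogonal to $\{e^{-\lambda_k^s t}\}$, i.e. $\int_0^T e^{-\lambda_j^{s'}t}\,q_k^s(t)\,dt=\delta_{jk}\delta_{s's}$, with at most polynomial growth of the norms --- a bound $\|q_k^s\|_{L^2(0,T)}\le C|k|^4$ is enough: then $h:=\sum_{k\neq0}\sum_{s}m_k^s\,q_k^s$ converges in $L^2(0,T)$, since by the above and \eqref{condthm1} one has $\sum_{k,s}|m_k^s|\,\|q_k^s\|_{L^2}\le C\sum_{k\neq0}|\beta_k|^{-1}(|k|^6|c_k|+|k|^4|d_k|)<\infty$, and biorthogonality makes $h$ satisfy every identity in \eqref{momentplan}, hence $v(\cdot,T)=v_t(\cdot,T)=0$ and the theorem follows. (When $b,y_0,\xi_0$ are real, taking $q_{-k}^s=\overline{q_k^s}$ makes $h$ real.)

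The construction of the $q_k^s$ is where the real work lies, and I would carry it out on the Fourier--Laplace side. Form the canonical product $E$ with zero set $\{i\lambda_k^s\}$: it converges because $\sum_{k\neq0}|\mathrm{Re}\,\lambda_k^-|\,|\lambda_k^-|^{-2}<\infty$ controls the parabolic zeros, while a single Weierstrass factor absorbs the hyperbolic ones. For fixed $(k,s)$, multiply the Lagrange-type interpolant $E(z)\big/\!\bigl[E'(i\lambda_k^s)(z-i\lambda_k^s)\bigr]$ by a bounded entire multiplier of exponential type $<(T-2\pi)/2$ normalized to $1$ at $i\lambda_k^s$, so that the product lies in the Paley--Wiener space of $L^2(0,T)$, and let $q_k^s$ be its inverse Fourier transform. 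The hard part is the quantitative bookkeeping: one needs a lower bound $|E'(i\lambda_k^+)|\ge C|k|^{-N}$ along the hyperbolic branch, which rests on recognizing $E$ there as essentially a sine-type function and invoking the corresponding separation estimates; one must fit the parabolic zeros --- which run off to $-\infty$, so that the associated $q_k^-$ turn out to be exponentially small and harmless --- into a single entire function of finite exponential type together with the hyperbolic ones; and one must track the polynomial weights so that the exponential room afforded by the strict inequality $T>2\pi$ beats the growth of $1/|E'(i\lambda_k^s)|$ and yields the stated polynomial bound on $\|q_k^s\|_{L^2(0,T)}$.
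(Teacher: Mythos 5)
Your proposal is correct and follows essentially the same route as the paper: pass to the moving frame $v(x,t)=y(x+t,t)$, diagonalize into the hyperbolic/parabolic branches $\lambda_k^\pm$, reformulate null controllability as a moment problem, and solve it by a biorthogonal family built from a canonical product paired with a Beurling--Malliavin-type multiplier and Paley--Wiener, using precisely the $|k|^4$ growth of the hyperbolic biorthogonals (and the exponentially small parabolic ones) to match the summability hypothesis \eqref{condthm1}. The only cosmetic differences are that you derive the moment equations mode-by-mode via variation of constants on the companion system rather than through the adjoint PDE, and you work on $(0,T)$ instead of the symmetric window $(-T/2,T/2)$; the double eigenvalues at $k\in\{0,\pm2\}$ are handled in the paper with generalized eigenfunctions and an extra set of biorthogonals $\tilde\psi_{\pm2}$, which you correctly flag as a finite-dimensional adjustment.
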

By Lemma~\ref{lem:IrrationalInterval}  (see below) there exist simple functions~$b$ such that $|\beta_k|$ decreases like~$1/|k|^3$, so that
~\eqref{condthm1} holds for $(y_0,\xi _0)\in H^{s+2}(\T )\times H^s(\T )$ with $s > 15/2$.

The second problem we consider is
\ba
&&y_{tt}-y_{xx}-y_{txx} = b(x-t)h(x,t), \qquad x\in \T, t>0,\label{cont2}\\
&&y(x,0)=y_0(x),\ y_t(x,0)=\xi _0 (x),\qquad x\in \T , \label{cont2bis}
\ea
where the control function $h$ is here allowed to depend also on $x$. For that internal controllability problem,
the following result will be established.
\begin{theorem}
\label{thm2}
Let $b={\mathbf 1}_{\omega}$ with $\omega$ a nonempty open subset of $\T$.
Then for any time $T>2\pi$ and any $(y_0,\xi _0)\in H^{s+2}(\T )\times H^s(\T ) $ with $s > 15/2$
there exists a control $h\in L^2(\T\times(0,T))$ such that the solution of \eqref{cont2}-\eqref{cont2bis} satisfies
$y(.,T)=y_t(.,T) = 0$.
\end{theorem}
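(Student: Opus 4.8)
The plan is to derive Theorem~\ref{thm2} from Theorem~\ref{thm1} by exploiting the fact that in \eqref{cont2} the control $h$ is now allowed to depend on $x$: I would look for a control of the special form $h(x,t)=\chi(x-t)\,g(t)$, where $\chi$ is a fixed function supported in $\omega$ and $g\in L^2(0,T)$ is a scalar control to be determined. Since $\chi$ vanishes outside $\omega$, one has ${\mathbf 1}_\omega(x-t)\,\chi(x-t)=\chi(x-t)$, so the source term in \eqref{cont2} is exactly $\chi(x-t)g(t)$, and \eqref{cont2}--\eqref{cont2bis} becomes an instance of the scalar–control problem \eqref{cont1}--\eqref{cont1bis} with $b$ replaced by $\chi$. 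Equivalently, writing $\widehat\chi(k)=\int_{\T}\chi(x)e^{-ikx}\,dx$, the $k$-th Fourier mode of $y$ satisfies $\ddot y_k+k^2\dot y_k+k^2 y_k=\frac{1}{2\pi}\widehat\chi(k)\,e^{-ikt}g(t)$, which is precisely the family of scalar equations treated in the proof of Theorem~\ref{thm1}, with $\widehat\chi(k)$ playing the role of $\beta_k$. Hence the whole matter reduces to choosing $\chi$ well.

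The function $\chi$ I would use is the one produced by Lemma~\ref{lem:IrrationalInterval}: after an affine change of variable it may be taken to be a ``simple'' (piecewise polynomial) function supported in a prescribed interval $I\subset\omega$, with $\widehat\chi(k)\neq 0$ for every $k$ and with $|\widehat\chi(k)|$ bounded below by a constant times $|k|^{-3}$ for $|k|$ large; the rescaling alters $\widehat\chi(k)$ only by an inessential factor and a phase, and the irrationality built into the construction is exactly what forbids zeros among the Fourier coefficients (and we may moreover assume $\widehat\chi(0)\neq0$, by adding if necessary a small smooth bump supported in $\omega$, which spoils neither the non-vanishing nor the decay of the other coefficients). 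With such a $\chi$, the analogue of \eqref{condthm1}, namely $\sum_{k\neq 0}|\widehat\chi(k)|^{-1}(|k|^{6}|c_k|+|k|^{4}|d_k|)<\infty$, is dominated by $\sum_{k\neq 0}(|k|^{9}|c_k|+|k|^{7}|d_k|)$, and by Cauchy--Schwarz this is finite whenever $(y_0,\xi_0)\in H^{s+2}(\T)\times H^s(\T)$ with $s>15/2$ (the exponent $2(9-(s+2))=14-2s$ that appears must be $<-1$), which is exactly the hypothesis of Theorem~\ref{thm2}. The zeroth Fourier mode, which Theorem~\ref{thm1} discards through the assumption $c_0=d_0=0$, is recovered here for free: integrating \eqref{cont2} over $\T$ gives $\frac{d^2}{dt^2}\int_{\T}y(x,t)\,dx=\widehat\chi(0)\,g(t)$, a controllable double integrator, so it suffices to append to the biorthogonal family built in the proof of Theorem~\ref{thm1} two further functions biorthogonal to the low–frequency test functions $1$ and $t$; this is a finite–rank modification, which preserves the relevant biorthogonality estimates since $1$ and $t$ lie well outside the closed span of the remaining (high–frequency) family, the free spectrum having no accumulation point and $0$ being separated from it.

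Granting these two points, the argument of Theorem~\ref{thm1} applies with $b=\chi$ and yields, for any $T>2\pi$, a scalar control $g\in L^2(0,T)$ driving the solution to $y(\cdot,T)=y_t(\cdot,T)=0$. Setting $h(x,t):=\chi(x-t)g(t)$ then gives a control in $L^2(\T\times(0,T))$, since $\int_0^T\!\!\int_{\T}|\chi(x-t)|^2|g(t)|^2\,dx\,dt=\|\chi\|_{L^2(\T)}^2\,\|g\|_{L^2(0,T)}^2<\infty$, and by construction the corresponding solution of \eqref{cont2}--\eqref{cont2bis} is steered to rest. This proves the theorem.

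The real substance is therefore concentrated in Lemma~\ref{lem:IrrationalInterval}: producing, inside an arbitrary open set, a simple profile whose discrete Fourier coefficients are all nonzero and decay no faster than $|k|^{-3}$. The lower bound is what pins the regularity threshold $s>15/2$, while the non-vanishing of every $\widehat\chi(k)$ is the delicate, Diophantine–flavoured point, and it is what forces an irrational ratio among the break-points of the profile. I expect that to be the main obstacle; once that lemma is in hand, Theorem~\ref{thm2} is essentially a corollary of Theorem~\ref{thm1} together with the elementary treatment of the zeroth mode sketched above.
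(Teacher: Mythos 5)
Your reduction of the problem to Theorem~\ref{thm1} through a control of the separable form $h(x,t)=\chi(x-t)g(t)$ with $\mathrm{supp}\,\chi\subset\omega$ is correct, as is the identification of Lemma~\ref{lem:IrrationalInterval} as the pivotal ingredient and the Cauchy--Schwarz computation pinning down the threshold $s>15/2$; all of that matches the paper. The gap is in your treatment of the zeroth Fourier mode. Theorem~\ref{thm1} is stated with $\beta_0=0$ \emph{and} $c_0=d_0=0$, precisely so that no moment condition arises at the double eigenvalue $\lambda_0=0$. You propose to lift this by taking $\widehat\chi(0)\neq 0$ and ``appending two further functions biorthogonal to $1$ and $t$,'' claiming this is a finite-rank modification preserving the estimates. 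That is not so. Once $\widehat\chi(0)\neq 0$, the moment problem acquires two new equations at $k=0$ involving $\int h$ and $\int (T-t)h$, and solving them inside the biorthogonal framework requires \emph{all} the other members $\psi_k^\pm$, $\psi_{\pm 2}$, $\tilde\psi_{\pm 2}$ to be orthogonal to the new test function $t$. But Proposition~\ref{prop1} asserts only $\int\psi_k^\pm(t)\,t\,e^{\lambda_p t}\,dt=0$ for $p\in\{\pm2\}$, not for $p=0$, and indeed the construction does not give $(I_k^\pm)'(0)=0$: the canonical product $P$ has only a simple zero at $z=0$. Enforcing $(I_k^\pm)'(0)=0$ for every $k$ would require planting a double zero at the origin, which is not a finite-rank adjustment and would alter the polynomial growth of each $I_k^\pm$ on $\R$, hence the crucial bounds \eqref{E6}--\eqref{E7}.

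The paper avoids this altogether with a two-stage control. First, on a short interval $[0,\epsilon]$ with $\epsilon=(T-2\pi)/2$, it uses a control of the \emph{non-moving} tensor form $\bar b(x)\bar h(t)$, with $\bar b\in C^\infty$ supported in $\omega$ and $\int\bar b=1$, to steer the zeroth mode $(\int_\T v,\int_\T v_t)$, which is a scalar double integrator, to the origin; this perturbs the other Fourier coefficients but in a quantitatively controlled way, and a bookkeeping argument (propagation in the weighted $\ell^1$ space $W$ with weights $|k|^9,|k|^7$, using the $C^\infty$ smoothness of the first control) shows the condition \eqref{condthm1} still holds at time $\epsilon$. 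Then, on $[\epsilon,T]$ (still of length $>2\pi$), it invokes Theorem~\ref{thm1} exactly as stated, with the zero-mean moving profile $\widetilde b$ from Lemma~\ref{lem:IrrationalInterval}, since by construction $\widetilde c_0=\widetilde d_0=0$. You should replace your ``append two biorthogonal functions'' step by this preliminary zeroth-mode sweep; otherwise your argument, as written, does not close.
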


We now turn our attention to some internal controls acting on a single moving point.
The first problem we consider reads
\ba
&&y_{tt}-y_{xx}-y_{txx} = h(t)\delta _{t}, \qquad x\in \T,\  t>0,\label{cont3}\\
&&y(x,0)=y_0(x),\ y_t(x,0)=\xi _0 (x),\qquad x\in \T,\label{cont3bis}
\ea
where $\delta _{x_0}$ represents the Dirac measure at $x=x_0$. We can as well replace
$\delta_{t}$ by $\frac{d \delta_{t}}{dx}$ in \eqref{cont3}, which yields another control problem:
\ba
&&y_{tt}-y_{xx}-y_{txx} = h(t)\frac{d\delta _t}{dx}, \qquad x\in \T,\  t>0,\label{cont4}\\
&&y(x,0)=y_0(x),\ y_t(x,0)=\xi _0 (x),\qquad x\in \T.\label{cont4bis}
\ea

Then we will obtain the following results.
\begin{theorem}
\label{thm3}
For any time $T>2\pi$ and any $(y_0,\xi _0)\in H^{s+2}(\T )\times H^s(\T ) $ with $s>9/2$, 
there exists a control $h\in L^2(0,T)$ such that the solution of \eqref{cont3}-\eqref{cont3bis} satisfies
$y(T,.)-[y(T,.)]=y_t(T,.) = 0$, where $[f]=(2\pi )^{-1}\int_0^{2\pi} f(x)dx$ is the mean value of $f$.
\end{theorem}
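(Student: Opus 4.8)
The plan is to follow the route used for Theorems~\ref{thm1}--\ref{thm2}: straighten the moving point by a change of frame, expand in Fourier series in $x$, and reduce the null controllability to a scalar moment problem solved by means of a suitable biorthogonal family. Concretely, set $v(x,t)=y(x+t,t)$. By the computation leading to \eqref{A9} (with $c=-1$, so that the term $(c^2-1)v_{xx}$ drops out), $v$ solves
\[
v_{tt}-2v_{xt}-v_{txx}+v_{xxx}=h(t)\,\delta_0 ,
\]
the moving Dirac $h(t)\delta_t$ having become the fixed Dirac $h(t)\delta_0$, with initial data $\bigl(v(\cdot,0),v_t(\cdot,0)\bigr)=\bigl(y_0,\,y_0'+\xi_0\bigr)$ as in \eqref{condini}.

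Writing $v(x,t)=\sum_{k\in\Z}v_k(t)e^{ikx}$ and $\delta_0=(2\pi)^{-1}\sum_{k\in\Z}e^{ikx}$, each mode obeys $\ddot v_k+(k^2-2ik)\dot v_k-ik^3v_k=(2\pi)^{-1}h(t)$, with characteristic roots $\lambda_{k,\pm}=\tfrac12\bigl(-k^2+2ik\pm k\sqrt{k^2-4}\bigr)$. As $|k|\to\infty$ one finds $\lambda_{k,+}=ik-1+O(1/|k|)$ --- the ``hyperbolic'' branch, with bounded real part and imaginary part $\sim k$ --- and $\lambda_{k,-}=-k^2+ik+1+O(1/|k|)$ --- the ``parabolic'' branch, with $\mathrm{Re}\,\lambda_{k,-}\to-\infty$; for $k=0$ the two roots coincide in a double root at the origin. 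In particular the spectrum has no finite accumulation point, which is exactly what makes a large-time null controllability statement conceivable. Decomposing each $v_k$ along its two eigenmodes, the condition $v_k(T)=\dot v_k(T)=0$ becomes, for $k\neq0$, a pair of moment conditions
\[
\int_0^T h(s)\,e^{\lambda_{k,\pm}(T-s)}\,ds=A_{k,\pm},
\]
where $A_{k,+}$ is an explicit linear functional of $(c_k,d_k)$ of size $O\bigl(k^2|c_k|+|d_k|\bigr)$, while $A_{k,-}$ carries in addition the smallness factor $e^{\lambda_{k,-}T}$, of order $e^{-k^2T}$, coming from the parabolic part of the free evolution having essentially already vanished at time $T$. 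For $k=0$ the double root contributes the pair $\{1,s\}$; we retain only the condition $\int_0^T h(s)\,ds=A_0$ (whence $\dot v_0(T)=0$), and not the one involving $\int_0^T s\,h(s)\,ds$, so that $v_0(T)$ --- that is the mean value $[y(\cdot,T)]$ --- is left uncontrolled, which is why the conclusion reads $y(\cdot,T)-[y(\cdot,T)]=0$.

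It then remains to build a family $\{\psi_{k,\pm}\}\subset L^2(0,T)$ biorthogonal to $\{e^{\lambda_{k,\pm}(T-s)}\}$ (together with the element biorthogonal to the function $1$) and to set $h=\sum_{k,\pm}A_{k,\pm}\psi_{k,\pm}$, the whole point being the convergence $\sum_{k,\pm}|A_{k,\pm}|\,\|\psi_{k,\pm}\|_{L^2(0,T)}<\infty$. The spectral splitting makes this possible. On the hyperbolic branch the points $-\lambda_{k,+}$ lie, up to a bounded perturbation, at heights $-k$ on the line $\mathrm{Re}\,z=1$, hence have density $1$; since $T>2\pi$ there is a genuine excess of length $T-2\pi$, and, exactly as in Theorems~\ref{thm1}--\ref{thm2}, an entire function of sine type adapted to these exponents together with a multiplier supported in the excess interval furnishes a biorthogonal family with polynomially bounded norms $\|\psi_{k,+}\|_{L^2(0,T)}\lesssim|k|^{N}$ for a fixed exponent $N$. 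The decisive gain over Theorems~\ref{thm1}--\ref{thm2} is that all Fourier coefficients of $\delta_0$ equal $(2\pi)^{-1}$, so there is no division by $\beta_k$ and hence no loss of $|k|^3$: the sum $\sum_{k,\pm}|A_{k,\pm}|\,\|\psi_{k,\pm}\|_{L^2(0,T)}$ is then dominated by the left-hand side of \eqref{condthm1} \emph{without} the weight $|\beta_k|^{-1}$, which by the same computation as in the remark following Theorem~\ref{thm1} is finite whenever $(y_0,\xi_0)\in H^{s+2}(\T)\times H^s(\T)$ with $s>9/2$. On the parabolic branch $\|\psi_{k,-}\|_{L^2(0,T)}$ grows at most exponentially in $|k|$ (a heat-equation-type estimate), but this is more than offset by the factor $e^{-k^2T}$ carried by $A_{k,-}$, so that part of the series converges trivially. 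By construction the resulting solution satisfies $v_k(T)=\dot v_k(T)=0$ for $k\neq0$ and $\dot v_0(T)=0$, and returning to $y(x,t)=v(x-t,t)$ gives $y(\cdot,T)-[y(\cdot,T)]=y_t(\cdot,T)=0$.

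The crux is the construction of the biorthogonal family with \emph{sufficiently sharp} estimates: one has to exhibit a sine-type generating function for the hyperbolic exponents, exploit the excess length $T-2\pi$ through a well-chosen multiplier to bring $\|\psi_{k,+}\|$ down to a fixed polynomial order, obtain the parabolic bounds uniformly and pair them with the exponentially small targets, and keep precise track of every polynomial factor so that the final summability condition is exactly $s>9/2$. The more routine remaining points are to justify that $v$ rebuilt from the Fourier series is a genuine solution lying in $C([0,T];H^{s+2}(\T))\cap C^1([0,T];H^s(\T))$, and to handle the confluent mode $k=0$ correctly.
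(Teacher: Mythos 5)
Your proposal follows the paper's strategy exactly: same change of frame $v(x,t)=y(x+t,t)$ straightening the moving Dirac into $h(t)\delta_0$, same Fourier reduction to a scalar moment problem with constant Fourier weight $\beta_k\equiv(2\pi)^{-1}$ (hence no $|k|^3$ loss relative to Theorem~\ref{thm1}), same use of the biorthogonal family of Proposition~\ref{prop1}, same observation that the mean $v_0(T)=[y(\cdot,T)]$ is left free because the second moment condition at the confluent mode $k=0$ is dropped, and the same Cauchy--Schwarz computation yielding $s>9/2$. This matches the paper, which disposes of Theorem~\ref{thm3} by referring back to the proof of Theorem~\ref{thm1} together with the estimate $\sum_{|k|>2}|k|^{4}(|k|^2|c_k|+|d_k|)\le C_\varepsilon(\sum_{|k|>2}|k|^{13+\varepsilon}|c_k|^2+|k|^{9+\varepsilon}|d_k|^2)^{1/2}$.

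One small but real oversight: you state that the two characteristic roots coincide only for $k=0$. In fact the discriminant $k^4-4k^2$ also vanishes at $k=\pm2$, so $\lambda_{2}^\pm=-2+2i$ and $\lambda_{-2}^\pm=-2-2i$ are double roots as well (see \eqref{ZZ1} and $\Lambda_2$ in the paper). Consequently the pair of conditions $\int_0^T h\,e^{\lambda_{k,\pm}(T-s)}ds=A_{k,\pm}$ degenerates into a single equation for $k=\pm2$, and one must add the generalized modes $(T-t)e^{\lambda_{\pm2}(T-t)}$ to the moment problem (as in \eqref{B190}) and the corresponding elements $\tilde\psi_{\pm2}$ to the biorthogonal family (as in \eqref{E5} and the construction of $\tilde I_{\pm2}$, $K_{\pm2}$ in the paper). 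This is a finite-dimensional fix that does not alter the estimate or the threshold $s>9/2$, but without it your family $\{\psi_{k,\pm}\}$ as described cannot exist for $k=\pm2$. Once this is incorporated, the argument is complete and coincides with the paper's.
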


\begin{theorem}
\label{thm4}
For any time $T>2\pi$ and any $(y_0,\xi _0)\in H^{s+2}(\T )\times H^s(\T ) $ with $s>7/2$ and such that $\int_{\T} y_0(x)dx  = \int_{\T } \xi _0(x) dx =0$,
there exists a control $h\in L^2(0,T)$ such that the solution of \eqref{cont4}-\eqref{cont4bis} satisfies
$y(T,.)=y_t(T,.) = 0$.
\end{theorem}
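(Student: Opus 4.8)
To prove Theorem~\ref{thm4} the plan is to run the moving-frame, Fourier, and moment-problem scheme already used for Theorem~\ref{thm3}; the only difference is that $h(t)\frac{d\delta_t}{dx}$ has $k$-th Fourier coefficient $\frac{ik}{2\pi}h(t)$ instead of $\frac{1}{2\pi}h(t)$, which accounts for the gain of one derivative in the regularity index. Setting $v(x,t)=y(x+t,t)$, equations \eqref{A9} and \eqref{condini} with $c=-1$ turn \eqref{cont4}--\eqref{cont4bis} into
\[
v_{tt}-2v_{xt}-v_{txx}+v_{xxx}=h(t)\,\frac{d\delta_0}{dx},\qquad x\in\T,\ t>0,
\]
and $(y(\cdot,T),y_t(\cdot,T))=(0,0)$ if and only if $(v(\cdot,T),v_t(\cdot,T))=(0,0)$. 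Since $\int_\T y_0=\int_\T\xi_0=0$ and $\frac{d\delta_0}{dx}$ has zero mean, the zero Fourier mode of $v$ stays $0$, so only the modes $k\ne0$ matter. Expanding $v(x,t)=\sum_{k\ne0}v_k(t)e^{ikx}$ and using \eqref{Fourier} together with \eqref{condini}, each $v_k$ solves
\[
\ddot v_k+(k^2-2ik)\dot v_k-ik^3v_k=\frac{ik}{2\pi}h(t),\qquad v_k(0)=c_k,\quad \dot v_k(0)=ikc_k+d_k.
\]

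Next I would study the characteristic roots $\lambda_k^\pm=\tfrac12\bigl(-k^2+2ik\pm|k|\sqrt{k^2-4}\bigr)$: as $|k|\to\infty$, $\lambda_k^+=ik-1+O(|k|^{-1})$ (the ``hyperbolic'' branch, with bounded real part) and $\lambda_k^-=-k^2+ik+1+O(|k|^{-1})$ (the ``parabolic'' branch, with $\mathrm{Re}\,\lambda_k^-\to-\infty$), while the finitely many exceptional indices ($k=\pm2$, where the two roots coincide, and a few small $|k|$) are treated by a direct finite-dimensional argument. Solving the ODE by Duhamel and imposing $v_k(T)=\dot v_k(T)=0$ is equivalent, because $e^{\lambda_k^+t}$ and $e^{\lambda_k^-t}$ are linearly independent, to the pair of moment conditions
\[
\int_0^Te^{-\lambda_k^+t}\,h(t)\,dt=m_k^+,\qquad \int_0^Te^{-\lambda_k^-t}\,h(t)\,dt=m_k^-,\qquad k\ne0,
\]
where $m_k^\pm$ are explicit linear combinations of $c_k$ and $d_k$; from the asymptotics one gets $m_k^+=2\pi i\,k\,c_k+O\bigl(|k|^{-1}(|c_k|+|d_k|)\bigr)$ and $m_k^-=O\bigl(|k|^{-1}(|c_k|+|d_k|)\bigr)$.

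The heart of the proof — the step I expect to be the main obstacle — is the construction of a family $(q_k^\pm)_{k\ne0}$ in $L^2(0,T)$ biorthogonal to the exponentials $(e^{-\lambda_k^\pm t})_{k\ne0}$, together with bounds on $\|q_k^\pm\|_{L^2(0,T)}$. Here the splitting of the spectrum is decisive. The hyperbolic exponents $-\lambda_k^+\approx1-ik$ are a small perturbation of the pure frequencies $\{-ik\}$, which form a Riesz sequence in $L^2(0,T)$ precisely when $T>2\pi$ — that is, exactly when the control time exceeds $2\pi$, the time the point moving at speed $1$ needs to sweep the whole torus — and the associated biorthogonal family is produced from an entire function of sine type. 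The parabolic exponents lie far in the left half-plane, $\mathrm{Re}(-\lambda_k^-)\sim k^2$, so one incorporates them by multiplying that generating function by a canonical product of order $\le\tfrac12$ vanishing at the $\lambda_k^-$: being of exponential type $0$, this factor does not spoil the Paley--Wiener support constraint, and the remaining bookkeeping (the loss $\lambda_k^+-\lambda_k^-\sim k^2$ from the Wronskian, and the ratios of the auxiliary products at $\lambda_k^+$ versus $\lambda_k^-$) yields $\|q_k^+\|_{L^2}\lesssim(1+|k|)^4$, while $\|q_k^-\|_{L^2}$ turns out to be exponentially small.

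Finally I would set $h=\sum_{k\ne0}\bigl(m_k^+q_k^++m_k^-q_k^-\bigr)$ and check convergence of this series in $L^2(0,T)$ by the Cauchy--Schwarz inequality: the parabolic part is trivially summable thanks to the exponentially small $\|q_k^-\|$, and the hyperbolic part requires $\sum_k|k|^5|c_k|<\infty$ and $\sum_k|k|^3|d_k|<\infty$, which — matching the exponents — amounts exactly to $(y_0,\xi_0)\in H^{s+2}(\T)\times H^s(\T)$ with $s>7/2$. The resulting control $h\in L^2(0,T)$ drives $v$, and hence $y$, to rest at time $T$, which is the assertion. (Running the same argument with $\frac{1}{2\pi}h(t)$ in place of $\frac{ik}{2\pi}h(t)$ on the right-hand side, and leaving the mean of $y$ uncontrolled, reproduces Theorem~\ref{thm3} and its threshold $s>9/2$.)
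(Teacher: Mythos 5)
Your reduction to a moment problem is correct and matches the paper's scheme for Theorem~\ref{thm4}: with $c=-1$, the source $h(t)\,d\delta_0/dx$ enters the adjoint computation with $\beta_k=ik$, so the zero mode is uncontrolled (whence the hypothesis $\int_{\T}y_0=\int_{\T}\xi_0=0$), the null-control condition becomes exactly \eqref{B18}--\eqref{B19}, and plugging $|\beta_k|^{-1}=|k|^{-1}$ into the bounds \eqref{E6}--\eqref{E7} of Proposition~\ref{prop1} gives, after Cauchy--Schwarz, precisely the threshold $s>7/2$ (the $p=3$ case of the estimate in subsection~\ref{ssec:ProofThm34}).

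The genuine gap is in your treatment of the biorthogonal family itself, which is the core difficulty that the entire Section~\ref{Sec:Section3} of the paper is devoted to. You write that the parabolic exponents are ``incorporated by multiplying that generating function by a canonical product of order $\le\tfrac12$ vanishing at the $\lambda_k^-$: being of exponential type $0$, this factor does not spoil the Paley--Wiener support constraint, and the remaining bookkeeping \ldots yields $\|q_k^+\|_{L^2}\lesssim(1+|k|)^4$.'' The exponential type is indeed still controlled, but the construction does \emph{not} close as described: the canonical product $P_2$ over the parabolic zeros, while of zero exponential type, grows like $e^{\sqrt{2}\pi\sqrt{|x|}}$ on the real axis (estimate \eqref{Q2}), so the candidate generating function $P(z)/(z-i\lambda_k^\pm)$ is unbounded on $\R$ and in particular not in $L^2(\R)$ --- Paley--Wiener then produces no $L^2(0,T)$ biorthogonal element at all. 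Remedying this requires the Beurling--Malliavin-type multiplier $m=\exp\tilde g(\cdot-i)$ built in \eqref{D5}--\eqref{D8}, which satisfies $|m(x)|\le C(1+|x|)e^{-\sqrt{2}\pi\sqrt{|x|}}$ on $\R$ while $|m(i\lambda_k^\pm)|$ stays controllably large (Proposition~\ref{prop5}); it has exponential type $a\pi$ with $a=T/(2\pi)-1$, so the total type of $Pm$ is $\pi(1+a)=T/2$ --- this is where the \emph{strict} inequality $T>2\pi$ enters, not (as your sketch suggests) through a Riesz-sequence condition on $\{e^{ikt}\}$, which would only give $T\ge 2\pi$. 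Everything else in your proposal --- the moving frame, the Fourier reduction, the $\beta_k=ik$ gain of one derivative over Theorem~\ref{thm3}, the final Cauchy--Schwarz bookkeeping --- is sound, but without the multiplier step the biorthogonal family you need does not exist by the argument you give.
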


The paper is organized as follows. Section \ref{Sec:Section2} is devoted to the proofs of the above theorems: in subsection~\ref{ssec:spectral} we investigate the
wellposedness and the spectrum of~\eqref{A9} for $c=-1$; in subsection~\ref{ssec:moments} the null controllability of~\eqref{cont1}-\eqref{cont1bis},
\eqref{cont3}-\eqref{cont3bis} and \eqref{cont4}-\eqref{cont4bis} are formulated as moment problems; Theorem~\ref{thm1} is proved in subsection~\ref{ssec:ProofThm1}
thanks to a suitable biorthogonal family which is  shown to exist in Proposition~\ref{prop1}; Theorem~\ref{thm2} is deduced from
 Theorem~\ref{thm1} in subsection~\ref{ssec:ProofThm2}; finally, the proofs of Theorems ~\ref{thm3} and~\ref{thm4}, that are almost identical to the proof of Theorem~\ref{thm1},  are sketched
in subsection~ \ref{ssec:ProofThm34}.
The rather long proof of Proposition~\ref{prop1} is postponed to Section \ref{Sec:Section3}. It combines different results of complex analysis about entire functions of exponential type, sine-type functions,  atomization of measures, and Paley-Wiener theorem.

\section{Proof of the main results}
\label{Sec:Section2}

\subsection{Spectral decomposition} \label{ssec:spectral}
The free evolution equation associated with \eqref{A9} reads
\be
\label{A10}
v_{tt}-2v_{xt} -v_{txx} +v_{xxx}=0.
\ee
Let $v$ be as in \eqref{A10}, and let $w=v_t$. Then \eqref{A10} may be written as
\be
\label{A11}
\left(
\begin{array}{c}
v\\w
\end{array}
\right) _t
=A
\left(
\begin{array}{c}
v\\w
\end{array}
\right)
:=
\left(
\begin{array}{c}
w\\
2w_x + w_{xx} -v_{xxx}
\end{array}
 \right) .
\ee
The eigenvalues of $A$ are obtained by solving the system
\be
\label{A12}
\left\{
\begin{array}{l}
w=\lambda v,\\
2\lambda v_x + \lambda v_{xx} -v_{xxx} =\lambda ^2 v.
\end{array}
\right.
\ee
Expanding $v$ as a Fourier series $v=\sum_{k\in \Z} v_k e^{ikx}$, we see that \eqref{A12} is satisfied
provided that for each $k\in \Z$
\be
\label{A14}
(\lambda ^2 +(k^2-2ik)\lambda -ik^3)v_k=0.
\ee
For $v_k\ne 0$, the only solution of \eqref{A14} reads
\be
\label{ZZ1}
\lambda =\lambda _k^{\pm } = \frac{-(k^2-2ik) \pm \sqrt{k^4-4k^2}}{2}\cdot
\ee
Note that
$$
\lambda _0^\pm =0,\ \lambda _2^\pm =-2+2i,\ \lambda _{-2} ^\pm =-2-2i
$$
while
$$\lambda _k^+\ne \lambda _l^- \ \text{ for }\  k,l\in \Z \setminus \{ 0,\pm 2 \} \quad \text{with } k\ne l.$$
For $|k| \ge 3$, $\lambda _k ^{\pm} = \frac{-k^2 \pm k^2 (1- 2 k^{-2} +O(k^{-4}) ) }{2} +ik$.
Hence
\begin{eqnarray}
\lambda _k^+ &=& -1 + ik + O(k^{-2}) \quad \text{ as } |k|\to \infty ,\label{A16}\\
\lambda _k^-  &=& -k^2 +1 +ik +O(k^{-2}) \quad \text{ as } |k|\to \infty \label{A17}.
\end{eqnarray}
The spectrum $\Lambda =\{ \lambda_k^\pm ;\  k\in \Z \}$ may be split  into $\Lambda = \Lambda ^+ \cup \Lambda ^- \cup \Lambda _2$ where\
\begin{eqnarray*}
\Lambda ^+&=& \left\{ \lambda _k^+;\ k\in \Z \setminus \{ 0 ,\pm 2 \}  \right\},\\
\Lambda ^-&=&  \left\{ \lambda _k^-;\ k\in \Z \setminus \{ 0 ,\pm 2 \}  \right\},\\
\Lambda _2 &=& \{ 0, -2  \pm 2i \}
\end{eqnarray*}
denote the {\em hyperbolic} part, the {\em parabolic} part, and the set of double eigenvalues, respectively. It is displayed on Figure~\ref{fig:spectrum}.
(See also \cite{LZ} for a system  whose spectrum may also be decomposed into a hyperbolic  part and a parabolic part.)
\begin{figure*}[!t]
\centerline{\includegraphics[width=150mm]{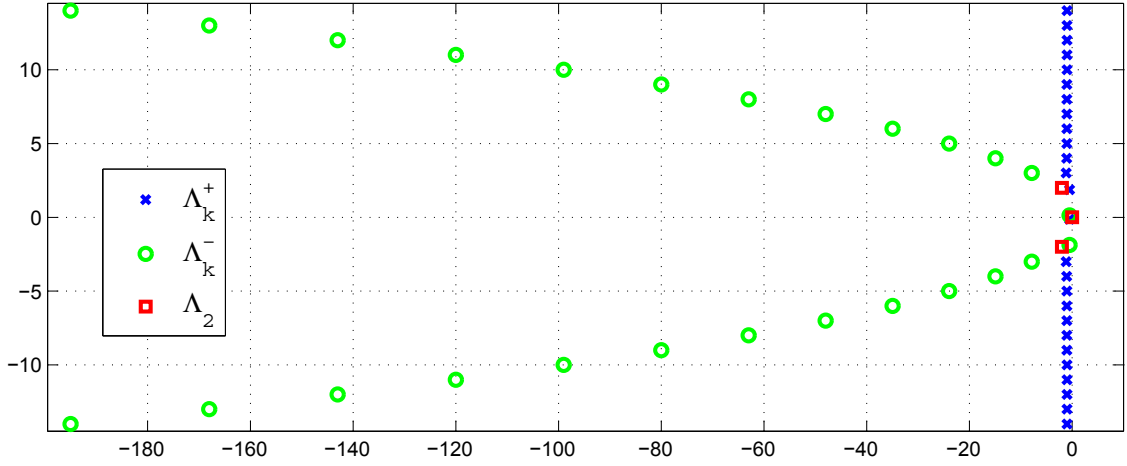}}
\caption{Spectrum of~\eqref{A10} splits into a hyperbolic part ($\Lambda^+_k$ in blue), a parabolic part ($\Lambda^-_k$ in green) and a finite dimensional part ($\Lambda_2$ in red).}
\label{fig:spectrum}
\end{figure*}

An eigenvector associated with the eigenvalue $\lambda _k^\pm$, $k\in \Z$, is
$\left(
\begin{array}{c}
e^{ikx}\\
\lambda_k^\pm e^{ikx}
\end{array}  \right)
$,
and the corresponding  exponential solution of \eqref{A10} reads
$$v_k^\pm (x,t) = e^{\lambda _k^\pm t} e^{ikx}.$$
For $k\in \{ 0,\pm 2\}$, we denote $\lambda _k=\lambda _k^+=\lambda _k^-$, $v_k(x,t)=e^{\lambda _k t } e^{ikx}$, and introduce
$$
\tilde v_k(x,t) : =t e^{\lambda _k t}e^{ikx}.
$$
Then we easily check that $\tilde v_k$ solves \eqref{A10} and
$$
\left( \begin{array}{c}
\tilde v_k\\
\tilde v_{kt}
 \end{array} \right)  (x,0) =
 \left(
 \begin{array}{c}
 0\\
 e^{ikx}
 \end{array}
 \right) .
$$
Any solution of \eqref{A10} may be expressed in terms of the $v_k^\pm$'s, the $v_k$'s, and  the $\tilde v_k$'s.
Introduce first the Hilbert space $\cH = H^1(\T ) \times L^2(\T )$ endowed with the scalar product
$$
\langle (v_1,w_1),(v_2,w_2) \rangle _{\cH} =\int_{\T}
[(v_1\overline{v_2}+v_1'\overline{v_2'}) +w_1\overline{w_2}]dx.
$$
Pick any
\be
\label{A20}
\left(
\begin{array}{c}
v_0\\
w_0
\end{array}
\right)
=\left(
\begin{array}{l}
\sum_{k\in\Z} c_k e^{ikx}\\
\sum_{k\in\Z}  d_k e^{ikx}
\end{array}
\right) \in \cH .
\ee
For $k\in \Z \setminus \{ 0,\pm 2 \} $, we write
\be
\label{A21}
\left(
\begin{array}{c}
c_ke^{ikx}\\
d_ke^{ikx}
\end{array}
 \right)
 =a_k^+
 \left(
\begin{array}{c}
e^{ikx}\\
\lambda _k^+ e^{ikx}
\end{array}
 \right)
+  a_k^-
\left(
\begin{array}{c}
e^{ikx}\\
\lambda_k^-  e^{ikx}
\end{array}
 \right)
\ee
with
\ba
a_k^+ &=& \frac{d_k-\lambda_k^- c_k}{\lambda_k^+ - \lambda_k^-}, \label{A22}\\
a_k^- &=& \frac{d_k-\lambda_k^+ c_k}{\lambda_k^- - \lambda_k^+}\cdot\label{A23}
\ea
For $k\in \{ 0,\pm 2\}$, we write
\be
\label{A24}
\left(
\begin{array}{c}
c_k e^{ikx}\\
d_k e^{ikx}
\end{array}
 \right)
 =a_k
 \left(
\begin{array}{c}
e^{ikx}\\
\lambda _k e^{ikx}
\end{array}
 \right)
+  \tilde a_k
\left(
\begin{array}{c}
0\\
e^{ikx}
\end{array}
 \right)
\ee
with
\be
a_k=c_k,\quad  \tilde a_k = d_k - \lambda _k c_k.
\label{XX1}
\ee
It follows that the solution $(v, w)$ of
\be\label{A26}
\left( \begin{array}{c}
v\\w
\end{array}
\right) _t =
A \left( \begin{array}{c}
v\\w
\end{array}  \right),\quad
\left( \begin{array}{c}
v\\w
\end{array}
\right) (0) =
\left( \begin{array}{c}
v_0\\w_0
\end{array}  \right)
\ee
may be decomposed as
\ba
\left( \begin{array}{c}
v(x,t)\\w(x,t)
\end{array}
\right)  =\sum_{k\in \Z \setminus \{0,\pm 2 \} }
\{ a_k^+ e^{\lambda _k^+ t}
\left( \begin{array}{c}
e^{ikx}\\ \lambda _k^+ e^{ikx}
\end{array}  \right) +
a_k^-  e^{\lambda _k^- t}
\left( \begin{array}{c}
e^{ikx}\\ \lambda _k^- e^{ikx}
\end{array}  \right)
\}\nonumber \\
+ \sum_{k\in \{0,\pm 2 \} }
\{
a_k e^{\lambda _k t}
\left( \begin{array}{c}
e^{ikx}\\ \lambda _k  e^{ikx}
\end{array}  \right)
+ \tilde a_k e^{\lambda _k t}
\left( \begin{array}{c}
t e^{ikx}\\
(1+\lambda _k t) e^{ikx}
\end{array}  \right)\}.
\label{A27}
\ea
\begin{proposition} \label{prop:cauchy}
Assume that $(v_0,w_0) \in H^{s+1}(\T ) \times H^s ( \T )$ for some $s\ge 0$. Then the solution
$(v,w)$ of \eqref{A26} satisfies $(v,w)\in C([0,+\infty ) ; H^{s+1}(\T )\times H^s(\T ))$.
\end{proposition}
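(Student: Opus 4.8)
The plan is to work directly with the explicit series \eqref{A27} and to reduce everything to Plancherel-type estimates on the Fourier coefficients. First I would record elementary bounds on the eigenvalues. From the asymptotics \eqref{A16}--\eqref{A17}, the relations $\lambda_k^++\lambda_k^-=-(k^2-2ik)$ and $\lambda_k^+\lambda_k^-=-ik^3$ read off from \eqref{A14}, and the identity $\lambda_k^+-\lambda_k^-=\sqrt{k^4-4k^2}$, there are constants $C_0,C>0$ and an integer $k_0\ge 3$ such that, for $|k|\ge k_0$,
\[
\operatorname{Re}\lambda_k^\pm\le C_0,\qquad C^{-1}|k|\le|\lambda_k^+|\le C|k|,\qquad C^{-1}|k|^2\le|\lambda_k^-|\le C|k|^2,\qquad |\lambda_k^+-\lambda_k^-|\ge C^{-1}|k|^2.
\]
(In fact $\operatorname{Re}\lambda_k^\pm\le 0$ for every $k$, but only an upper bound matters.) Inserting these into \eqref{A22}--\eqref{A23} yields, for $|k|\ge k_0$,
\[
|a_k^+|\le C\bigl(|c_k|+|k|^{-2}|d_k|\bigr),\qquad |a_k^-|\le C\bigl(|k|^{-1}|c_k|+|k|^{-2}|d_k|\bigr),
\]
the finitely many modes $|k|<k_0$ (other than $0,\pm2$, which sit in the finite-dimensional part) being harmless.

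Next I would estimate the $v$- and $w$-components of \eqref{A27} separately on a time interval $[0,T]$, using $|e^{\lambda_k^\pm t}|\le e^{C_0t}$. For the $v$-component the coefficient of $e^{ikx}$ is $a_k^+e^{\lambda_k^+t}+a_k^-e^{\lambda_k^-t}$, and the bounds above give
\[
|k|^{2(s+1)}\bigl|a_k^+e^{\lambda_k^+t}+a_k^-e^{\lambda_k^-t}\bigr|^2\le C\,e^{2C_0T}\bigl(|k|^{2(s+1)}|c_k|^2+|k|^{2(s-1)}|d_k|^2\bigr),
\]
whose sum over $k$ is dominated by $\|v_0\|_{H^{s+1}}^2+\|w_0\|_{H^s}^2$ (using $H^s\hookrightarrow H^{s-1}$). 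For the $w$-component the coefficient of $e^{ikx}$ is $a_k^+\lambda_k^+e^{\lambda_k^+t}+a_k^-\lambda_k^-e^{\lambda_k^-t}$; here $|a_k^+\lambda_k^+|\le C(|k||c_k|+|k|^{-1}|d_k|)$ and, crucially, $|a_k^-\lambda_k^-|\le C(|k||c_k|+|d_k|)$ — the large factor $|\lambda_k^-|\sim|k|^2$ being absorbed by the gains $|k|^{-1},|k|^{-2}$ in $a_k^-$ — so that
\[
|k|^{2s}\bigl|a_k^+\lambda_k^+e^{\lambda_k^+t}+a_k^-\lambda_k^-e^{\lambda_k^-t}\bigr|^2\le C\,e^{2C_0T}\bigl(|k|^{2(s+1)}|c_k|^2+|k|^{2s}|d_k|^2\bigr),
\]
again summable and controlled by $\|v_0\|_{H^{s+1}}^2+\|w_0\|_{H^s}^2$. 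The three remaining modes $k\in\{0,\pm2\}$ contribute a finite sum of terms of the form $(\text{polynomial in }t)\,e^{\lambda_kt}e^{ikx}$, which is smooth in $t$ with values in $H^{s+1}\times H^s$.

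Finally, continuity in time follows from the standard fact that a locally uniform limit of continuous functions is continuous: the truncated sums $(v_N,w_N)$ ($|k|\le N$) are finite sums of $C^\infty$ curves in $H^{s+1}\times H^s$, and the estimates above bound $\sup_{t\in[0,T]}\|(v,w)(t)-(v_N,w_N)(t)\|_{H^{s+1}\times H^s}^2$ by $e^{2C_0T}$ times the tail $\sum_{|k|>N}(\cdots)$ of a convergent series, hence it tends to $0$ as $N\to\infty$ for every $T>0$. This gives $(v,w)\in C([0,+\infty);H^{s+1}(\T)\times H^s(\T))$, with the quantitative bound $\|(v,w)(t)\|_{H^{s+1}\times H^s}\le C(1+t)\,\|(v_0,w_0)\|_{H^{s+1}\times H^s}$ as a byproduct (the factor $1+t$ being forced only by the generalized eigenfunction $\tilde v_0$ attached to $\lambda_0=0$). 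The only genuinely delicate point is the bookkeeping of the powers of $|k|$ in the coefficient estimates; once \eqref{A16}--\eqref{A17} are in hand, the very strong parabolic decay of $\lambda_k^-$ ensures that no derivative is ever lost, and the rest is routine.
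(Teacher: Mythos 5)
Your proof is correct and follows essentially the same route as the paper: Fourier decomposition via \eqref{A27}, asymptotics \eqref{A16}--\eqref{A17} giving coefficient bounds for $a_k^\pm$ from \eqref{A22}--\eqref{A23}, uniform-in-$t$ control from $\operatorname{Re}\lambda_k^\pm$ bounded above, and continuity by approximating with the finite truncations (the paper phrases this as a density argument on smooth data, which amounts to the same thing). The only difference is presentational: the paper encodes the coefficient estimates as an equivalence of norms $\|(v_0,w_0)\|_{H^{s+1}\times H^s}\sim(\sum_k|k|^{2s}(k^2|a_k^+|^2+k^4|a_k^-|^2)+\cdots)^{1/2}$, whereas you bound $|a_k^\pm|$ directly in terms of $|c_k|,|d_k|$; the arithmetic is identical. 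Your closing remark about the $(1+t)$ growth coming from the Jordan block at $\lambda_0=0$ is in fact a small correction to the paper's displayed estimate \eqref{A30}, which is stated as a time-uniform bound but only justifies it for the modes $k\notin\{0,\pm2\}$; since the mode $k=0$ with $\tilde a_0\neq0$ contributes $\tilde a_0\,t$ to $v$, the sup over $\R^+$ is infinite and the bound must carry a $(1+t)$ factor. This does not affect the proposition, which only asserts continuity, but your version is the more accurate statement.
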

\begin{proof}
Assume first that $(v_0,w_0)\in C^\infty (\T )\times C^\infty (\T )$. Decompose
$(v_0,w_0)$ as in \eqref{A20}, and let $a_k^\pm$ for $k\in \Z\setminus \{ 0, \pm 2\} $, and $a_k, \tilde a _k$ for $k\in \{ 0, \pm 2\}$, be as in \eqref{A22}-\eqref{A23}
and \eqref{XX1}, respectively. Then, from the classical Fourier definition of
Sobolev spaces, we have that
\begin{eqnarray*}
||(v_0,w_0)||_{ H^{s+1}(\T ) \times H^s(\T )} &\sim& \left(  \sum_{k\in \Z} (|k|^ 2 + 1)^{s} \big( (|k|^2+1) |c_k|^2 + |d_k|^2 \big) \right) ^{\frac{1}{2}} \\
&\sim& \left( \sum_{k\in \Z \setminus \{ 0, \pm 2\} } |k|^{2s} (k^2 |a_k^+|^2 +k^4 |a_k^-| ^2)  + \sum_{k\in \{0, \pm 2\} } (|a_k|^2+|\tilde a_k|^2 ) \right)^{\frac{1}{2}} .
\end{eqnarray*}
For the last equivalence of norms, we used \eqref{A16}-\eqref{A17} and \eqref{A21}-\eqref{A23}. Since
$$
|e^{\lambda _k^+ t}| + |e^{\lambda _k^- t } | \le C\quad \text{ for } |k|>2,\ t\ge 0,
$$
we infer that
$$
\sum_{k\in \Z \setminus \{ 0, \pm 2\} } |k|^{2s} \left(  k^2 |a_k^+  e^{\lambda _k^+ t} | ^2 + k^4 | a_k^-  e^{\lambda _k^- t} |^2 \right)
\le C \sum_{ k\in \Z \setminus \{ 0, \pm 2\} } |k|^{2s} \left(  k^2 |a_k^+ | ^2  + k^4 | a_k^- |^2 \right)   <\infty,
$$
hence
\be
||(v,w)||_{L^\infty (\R ^+,\  H^{s+1}(\T )\times H^s(\T ))} \le C||(v_0,w_0)||_{H^{s+1}(\T ) \times H^s (\T )} \cdot
\label{A30}
\ee
The result follows from \eqref{A27} and \eqref{A30} by  a density argument.
\end{proof}

\subsection{Reduction to moment problems} \label{ssec:moments}
\subsubsection{Internal control}
We investigate the following control problem
\be
v_{tt} - 2v_{xt} -v_{txx} +v_{xxx} =b(x)h(t),
\label{B1}
\ee
where $b\in L^2(\T )$, $\hbox{supp }b\subset \omega \subset \T$ and  $h\in L^2(0,T)$.
The adjoint equation to \eqref{B1} reads
\be
\label{B4}
\varphi_{tt}-2\varphi_{xt} +\varphi_{txx} -\varphi_{xxx} = 0.
\ee
Note that $\varphi(x,t) = v(2\pi-x,T-t)$ is a solution of \eqref{B4} if $v$ is a solution of
\eqref{B1} for $h\equiv 0$.  Pick any (smooth enough) solutions $v$ of \eqref{B1} and $\varphi$ of \eqref{B4}, respectively.
Multiplying each term in \eqref{B1}  by $\overline{\varphi}$ and integrating by parts, we obtain
\be
\int_{\T } [v_t \overline{\varphi} + v ( -\overline{\varphi} _t + 2 \overline{\varphi}_x -\overline{\varphi}_{xx} ) ]  \bigg\vert_0^T dx =
\int_0^T\!\!\!\int_{\T} hb\overline{\varphi}\, dxdt.
\label{B5}
\ee
Pick first $\varphi (x,t)=e^{\lambda _{-k}^\pm  (T-t)} e^{ikx}
=e^{\overline{\lambda _k^\pm}  (T-t)} e^{ikx}$ for $k\in\Z$.
Then  \eqref{B5} may be written
\ba
\lan v_t(T), e^{ikx}\ran  +( \lambda _k^\pm -2ik +k^2 )\lan v(T), e^{ikx} \ran
-e^{\lambda _k ^\pm T } \gamma _k ^\pm \nonumber \\
= \int_0^T h(t) e^{\lambda _k^\pm (T-t) }dt ~ \int_{\T}  b(x) e^{-ikx}dx,
\label{B6}
\ea
where $\langle .,.  \rangle$ stands for the duality pairing $\langle.,. \rangle_{{\mathcal D}'(\T ), {\mathcal D}(\T ) } $, and
\[
\gamma _k^\pm =\lan v_t(0), e^{ikx} \ran + (\lambda _k^\pm -2ik + k^2 ) \lan v(0), e^{ikx} \ran .
\]
If we now pick $\varphi (x,t)=(T-t)e^{\overline{\lambda _k}(T-t)} e^{ikx}$ for $k\in \{ 0, \pm 2\}$, then
\eqref{B5} yields
\ba
\lan v (T), e^{ikx}\ran  -\left\{ T e^{\lambda _k T} \lan v_t(0),  e^{ikx}  \ran
+ [1+T (\lambda _k -2ik + k ^2) ] e^{\lambda _k T} \lan v(0), e^{ikx} \ran \right\}  \nonumber \\
= \int_0^T (T-t) h(t) e^{\lambda _k(T-t) }dt ~ \int_{\T}  b(x) e^{-ikx}dx \quad k\in \{0, \pm 2 \} .
\label{B7}
\ea
Set $\beta _k= \int_{\T} b(x) e^{-ikx}dx$ for $k\in \Z$. The control problem can be reduced to a moment problem.
Assume that there exists some function $h\in L^2(0,T)$ such that
\ba
&&\beta _k \int_0^T e^{\lambda _k^\pm (T-t) }h(t) dt = - e^{\lambda _k^\pm T} \gamma _k^\pm \quad \forall k\in \Z ,\label{B8}\\
&&\beta_k  \int_0^T (T-t) e^{\lambda _k (T-t)} h(t) dt  \nonumber \\
&&\quad = -T   e^{\lambda _k T}  \lan v_t(0) , e^{ikx} \ran
- [1+T(\lambda _k -2ik + k^2) ] e^{\lambda _k T } \lan v(0), e^{ikx}\ran \quad \forall k\in \{ 0, \pm 2  \}.\qquad  \label{B8bis}
\ea
Then it follows from \eqref{B6}-\eqref{B8bis} that
\ba
\lan v_t(T), e^{ikx} \ran + (\lambda _k^\pm -2ik + k^2 ) \lan  v(T) ,  e^{ikx} \ran &=&0 \quad \forall k\in \Z , \label{B10} \\
\lan v(T), e^{ikx} \ran &=&0\quad \forall k\in \{0, \pm 2 \}. \label{B11}
\ea
Since $\lambda _k^+ \ne \lambda _k^- $ for $k\in \Z \setminus \{ 0, \pm 2  \}$, this yields
\be
v(T)=v_t(T)=0. \label{B12}
\ee
\subsubsection{Point control}
Let us consider first the control problem
\be
v_{tt} - 2 v_{xt} -v_{txx} + v_{xxx} = h(t) \frac{d\delta _0}{dx}\cdot
\label{B13}
\ee
Then the right hand side of \eqref{B5} is changed into $\int_0^T h(t) \lan \frac{d\delta _0}{dx},\varphi  \ran dt$. For
$\varphi (x,t) = e^{\overline{\lambda _k^\pm }(T-t)} e^{ikx}$, we have
\[
\lan \frac{d\delta _0}{dx},\varphi  \ran =-\lan \delta _0, \frac{\partial \varphi}{\partial x}\ran =ik \, e^{\lambda _k^\pm (T-t)}
\]
hence the right hand sides of \eqref{B6} and \eqref{B7} are changed into $\int_0^T (ik) e^{\lambda _k^\pm (T-t)} h(t) dt$ and
$\int_0^T (ik) (T-t) e^{\lambda _k (T-t)} h(t) dt$, respectively.
Let $\beta _k = ik$ for $k\in \Z$. Note that $\beta _0=0$ and that \eqref{B6}-\eqref{B7} for $k=0$ read
\ba
&&\lan v_t(T), 1 \ran  - \lan  v_t(0) , 1 \ran =0, \label{B15}\\
&&\lan v(T), 1 \ran - T \lan v_t(0), 1\ran - \lan  v(0), 1\ran =0. \label{B16}
\ea
Thus, the mean values of $v$ and $v_t$ cannot be controlled. Let us formulate the moment problem to be solved.
Assume that
\be
\lan v(0), 1  \ran = \lan v_t(0), 1 \ran =0,
\label{B17}
\ee
and that there exists some $h\in L^2(0,T)$ such that
\ba
&&ik\int_0^T e^{\lambda _k ^\pm (T-t)}h(t) dt = -
e^{\lambda _k^\pm T} \gamma _k ^\pm \quad \forall k\in \Z \setminus \{0\} ,
\label{B18}\\
&& ik\int_0^T (T-t) e^{\lambda _k  (T-t)}h(t) dt \nonumber \\
&&\quad =
 - T e^{\lambda _k T}  \lan v_t(0) , e^{ikx} \ran
 -  [1+T(\lambda _k -2ik+ k^2) ]e^{\lambda _kT} \lan v(0) , e^{ikx}\ran \quad \forall k\in \{\pm 2\} .
\label{B19}
\ea
Then we infer from \eqref{B6}-\eqref{B7} (with the new r.h.s.) and \eqref{B15}-\eqref{B19} that
\[
v(T)=v_t(T)=0.
\]
Finally, let us consider the control problem
\be
v_{tt} - 2 v_{xt} -v_{txx} + v_{xxx} = h(t) \delta _0\cdot
\label{B130}
\ee
Then the computations above are valid with the new values of $\beta _k$ given by
\[
\beta _k = \langle \delta _0, e^{ikx} \rangle =1,\qquad k\in \Z.
\]
It will be clear from the proof of Theorem \ref{thm1}
that $\langle v_t(T), 1 \rangle$ can be controlled, while $\langle v(T), 1\rangle$ cannot.
To establish Theorem \ref{thm3}, we shall have to find a control function $h\in L^2(0,T)$ such that
\ba
&&\int_0^T e^{\lambda _k ^\pm (T-t)}h(t) dt = -
e^{\lambda _k^\pm T} \gamma _k ^\pm \quad \forall k\in \Z  ,
\label{B180}\\
&& \int_0^T (T-t) e^{\lambda _k  (T-t)}h(t) dt \nonumber \\
&&\quad =
 - T e^{\lambda _k T}  \lan v_t(0) , e^{ikx} \ran
 -  [1+T(\lambda _k -2ik+ k^2) ]e^{\lambda _kT} \lan v(0) , e^{ikx}\ran \quad \forall k\in \{\pm 2\} .
\label{B190}
\ea
\subsection{A Biorthogonal family}\label{ssec:biorthogonal}

To solve the moments problems in the previous section, we need to
construct a biorthogonal family to the functions $e^{\lambda _k^\pm t}$, $k\in \Z$, and
$t\, e^{\lambda _k t}$, $k\in \{ \pm 2\}$. More precisely, we shall prove the following
\begin{proposition}
\label{prop1}
There exists a family $\{ \psi_k^\pm \} _{k\in\Z \setminus \{0,\pm 2\} } \cup \{ \psi _k \} _{k\in \{ 0,\pm 2 \} } \cup \{ \tilde \psi _k\}_{k\in \{ \pm 2\} }$
of functions in $L^2(-T/2,T/2)$  such that
\ba
&&\int_{-T/2}^{T/2} \psi _k^\pm (t) e^{\lambda _l ^\pm t }dt = \delta_k^l\delta _+^-\qquad k,l\in \Z \setminus \{0,\pm 2\}, \label{E1}\\
&&\int_{-T/2}^{T/2}\psi_k^\pm (t) e^{\lambda _l t}dt= \int_{-T/2}^{T/2} \psi_k ^\pm (t) t e^{\lambda _p t}dt =0 \qquad  k\in \Z \setminus \{0, \pm 2\},\ l\in \{ 0,\pm 2\},
\   p\in \{ \pm 2 \} , \qquad \qquad \label{E2} \\
&&\int_{-T/2}^{T/2} \psi _l (t) e^{\lambda _k^\pm t}dt = \int_{-T/2}^{T/2} \tilde \psi _p (t) e^{\lambda _k ^\pm t } dt =0
\qquad l\in \{ 0,\pm 2\},\  k\in \Z \setminus \{0, \pm 2\},\  p\in \{ \pm 2 \},\qquad   \label{E3} \\
&&\int_{-T/2}^{T/2} \psi _l (t) e^{\lambda _k t} dt = \delta _l^k,\quad
\int_{-T/2}^{T/2} \psi _l (t) t e^{\lambda _p t} dt=0\qquad
l,k \in \{ 0,\pm 2 \} , \   p\in \{ \pm  2\}, \label{E4}\\
&& \int_{-T/2}^{T/2} \tilde \psi _p (t) e^{\lambda _k t}dt =0, \quad
\int_{-T/2}^{T/2} \tilde \psi _p(t) t e^{\lambda _q t } dt
=\delta _p^q \qquad p,q \in \{\pm 2\}, \  k\in \{ 0,\pm 2\},  \label{E5} \\
&&||\psi _k^+||_{L^2(-T/2,T/2)} \le C |k|^4 \qquad\qquad\qquad \qquad \qquad   k\in \Z \setminus \{0, \pm 2\},\qquad \label{E6}\\
&&||\psi _k^-||_{L^2(-T/2,T/2)} \le C |k| ^2 e^{-\frac{T}{2} k^2 +2\sqrt{2} \pi |k|} \qquad \qquad k\in\Z \setminus \{ 0, \pm 2\}, \label{E7}
\ea
where $C$ denotes some positive constant.
\end{proposition}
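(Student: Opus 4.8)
The plan is to transfer the whole construction to the Fourier--Laplace side and carry it out there as an interpolation problem in a Paley--Wiener space. For $\psi\in L^2(-T/2,T/2)$ set $\widehat\psi(z)=\int_{-T/2}^{T/2}\psi(t)e^{-izt}\,dt$, so that $\widehat\psi(i\lambda)=\int_{-T/2}^{T/2}\psi(t)e^{\lambda t}\,dt$ and $i\,\widehat\psi{}'(i\lambda)=\int_{-T/2}^{T/2}\psi(t)\,t\,e^{\lambda t}\,dt$. Thus the relations \eqref{E1}--\eqref{E5} say exactly that $\widehat\psi$ takes prescribed values (and, at the three points of $i\Lambda_2$, prescribed values and first derivatives) on the discrete set $i\Lambda=\{i\lambda_k^\pm;\ k\in\Z\}$, equal to $1$ at one chosen node and to $0$ at all the others. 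By the Paley--Wiener theorem the map $\psi\mapsto\widehat\psi$ is, up to a multiplicative constant, a unitary isomorphism of $L^2(-T/2,T/2)$ onto the space $PW_{T/2}$ of entire functions of exponential type at most $T/2$ whose restriction to $\R$ is square integrable. Hence it suffices to produce such interpolants inside $PW_{T/2}$; the $\psi_k^\pm$ are then their inverse Fourier transforms restricted to $(-T/2,T/2)$, and \eqref{E6}--\eqref{E7} become $L^2(\R)$ bounds on the interpolants. The hypothesis $T>2\pi$ will be used precisely to leave a strictly positive budget of exponential type after the ``geometric'' part of the construction.

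The interpolants will be built from two model entire functions, one for each half of the spectrum. By \eqref{A16} one has $\lambda_k^+=-1+ik+O(k^{-2})$, so the nodes $i\lambda_k^+=-k-i+O(k^{-2})$ form an $\ell^{1}$ (indeed $\ell^{1/2}$) perturbation of the arithmetic progression $\{-k-i\}$; a standard stability result for sine-type functions then produces an entire function $E_+$ of sine type and of exponential type exactly $\pi$ whose zero set is precisely $\{i\lambda_k^+;\ k\in\Z\}$. We shall use the characteristic features of sine-type functions: $|E_+(x)|\asymp1$ for real $x$ bounded away from the zeros, $|E_+'(i\lambda_k^+)|\asymp1$ uniformly in $k$, and $|E_+(z)|\asymp e^{\pi|\mathrm{Im}\,z|}$ for $|\mathrm{Im}\,z|$ large; in particular $|E_+(i\lambda_k^-)|$ is of size $e^{\pi k^2}$ up to bounded factors, since $|\mathrm{Im}(i\lambda_k^-)|=k^2+O(1)$ by \eqref{A17}. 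Also by \eqref{A17}, $|\lambda_k^-|\asymp k^2$, hence $\sum_k|i\lambda_k^-|^{-1}<\infty$ and the canonical product $E_-(z)=\prod_k\bigl(1-z/(i\lambda_k^-)\bigr)$ converges to an entire function of order $1/2$, in particular of exponential type $0$. The quantitative inputs needed from $E_-$ are a lower bound for $|E_-'(i\lambda_k^-)|$, the size of $|E_-(i\lambda_k^+)|$ (which turns out to be bounded below by a positive constant, because the parabolic nodes stay away from the line $\mathrm{Im}=-1$), and an upper bound $|E_-(x)|\le e^{O(\sqrt{|x|})}$ on $\R$; all of these amount to estimating infinite products such as $\prod_{l\ne k}|1-\lambda_k^-/\lambda_l^-|$ and $\prod_l|1-\lambda_k^+/\lambda_l^-|$, done by comparison with the Euler product of $\sin$ together with Stirling's formula.

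With these at hand, one assembles the interpolants by the classical recipe. For $k\in\Z\setminus\{0,\pm2\}$ set, schematically,
\[
\widehat{\psi_k^+}(z)=c_k^+\,\frac{E_+(z)\,E_-(z)\,N(z)\,P(z)}{z-i\lambda_k^+},\qquad
\widehat{\psi_k^-}(z)=c_k^-\,\frac{E_+(z)\,E_-(z)\,N(z)\,S(z)\,P(z)}{z-i\lambda_k^-},
\]
where $P(z)=z(z+2+2i)^2(z-2+2i)^2$ installs the simple zero at $0$ and the double zeros at $-2\mp 2i=i\lambda_{\pm2}$ required by \eqref{E2}; $S$ is an auxiliary sine-type function of exponential type $T/2-\pi-\varepsilon$ with zeros kept away from $i\Lambda$, inserted only in the parabolic interpolants in order to raise their exponential type close to $T/2$ (which is forced, since \eqref{E7} asks for a bound smaller than the $e^{-(\pi+\varepsilon)k^2}$ one would get from a function supported in $(-\pi-\varepsilon,\pi+\varepsilon)$); $N$ is an exponential-type-zero, order-$1/2$ multiplier, non-vanishing on $i\Lambda$, built by atomization of a suitable measure so that $E_-(x)N(x)P(x)$ is bounded on $\R$ while $|N(i\lambda_k^\pm)|$ remains controlled; and $c_k^\pm$ are the normalising constants making the value at the relevant node equal to $1$. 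The finitely many functions $\psi_k$ ($k\in\{0,\pm2\}$) and $\tilde\psi_k$ ($k\in\{\pm2\}$) are obtained from the same ingredients by a finite-rank modification handling the coalescing eigenvalues and the derivative normalisations in \eqref{E4}--\eqref{E5}. By construction each interpolant lies in $PW_{T/2}$ (its exponential type is $\pi+0+0+(\deg P\cdot 0)<T/2$ in the hyperbolic case and $\pi+0+0+(T/2-\pi-\varepsilon)<T/2$ in the parabolic one) and vanishes at all nodes except the prescribed one, so \eqref{E1}--\eqref{E5} hold, and the biorthogonal functions are read off via the inverse Fourier transform.

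It remains to estimate the norms. By Plancherel, $\|\psi_k^\pm\|_{L^2(-T/2,T/2)}=\mathrm{const}\cdot\|\widehat{\psi_k^\pm}\|_{L^2(\R)}$, and since $E_+E_-NP$ is bounded on $\R$ one gets in the hyperbolic case
\[
\|\widehat{\psi_k^+}\|_{L^2(\R)}\ \lesssim\ \frac{1}{|E_+'(i\lambda_k^+)|\,|E_-(i\lambda_k^+)|\,|N(i\lambda_k^+)|\,|P(i\lambda_k^+)|}\Bigl(\int_\R\frac{dx}{|x-i\lambda_k^+|^{2}}\Bigr)^{1/2}.
\]
Here $\mathrm{Im}\,\lambda_k^+\to-1$, so the last integral is $O(1)$, $|E_+'(i\lambda_k^+)|\asymp1$, $|E_-(i\lambda_k^+)|$ is bounded below, and $|P(i\lambda_k^+)|$ is bounded below by a positive power of $|k|$; together with the bookkeeping of subsection~\ref{ssec:moments} this yields the polynomial bound \eqref{E6} (the exponent $4$ being exactly what makes \eqref{condthm1} the natural condition in Theorem~\ref{thm1}). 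In the parabolic case $\mathrm{Im}\,\lambda_k^-\asymp k$, so the analogous integral is $O(|k|^{-1})$, while the normalising denominator contains $|E_+(i\lambda_k^-)|\asymp e^{\pi k^2}$ and $|S(i\lambda_k^-)|\asymp e^{(T/2-\pi-\varepsilon)k^2}$, whose product is $e^{(T/2-\varepsilon)k^2}$; combining this with the lower bound for $|E_-'(i\lambda_k^-)|$ and the (subexponential in $|k|$) value of $|N(i\lambda_k^-)|$ produces the super-exponential decay \eqref{E7}, the factor $e^{-\frac T2 k^2}$ reflecting that $(-T/2,T/2)$ is centred at $0$ while $|e^{\lambda_k^-t}|$ peaks like $e^{(k^2-1)T/2}$ at $t=-T/2$, and the linear correction $2\sqrt2\,\pi|k|$ being the ``cost of spectral separation'' for the parabolic branch that emerges from the product estimates. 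The main obstacle is precisely this analytic core: arranging the sine-type functions $E_+,S$, the order-$1/2$ multiplier $N$, and the product estimates for $E_-$ sharply enough — in particular controlling $|N|$ simultaneously on $\R$ and at the nodes $i\lambda_k^\pm$ — so as to obtain the stated exponents $4$ and $-\tfrac T2k^2+2\sqrt2\,\pi|k|$; the rest is bookkeeping.
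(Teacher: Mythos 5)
Your Fourier/Paley--Wiener reduction, the two model products $E_+$ (sine type, zeros at $i\lambda_k^+$) and $E_-$ (order-$1/2$ canonical product over $i\lambda_k^-$), and the overall interpolation-with-normalisation strategy are all aligned with the paper (which uses $P_1$, $P_2$, $P$ and the classical biorthogonal recipe). However, there is a genuine gap where you put the analytic weight: the multiplier $N$ you describe cannot exist, and this breaks even the hyperbolic interpolants, not just the parabolic ones.

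You ask for an entire $N$ of order $1/2$ (hence of exponential type zero) such that $E_-(x)N(x)P(x)$ is bounded on $\R$. Since $P$ is a polynomial and $|E_-(x)|$ grows like $e^{\sqrt2\pi\sqrt{|x|}}$ on the real line, boundedness of $E_-NP$ forces $|N(x)|\lesssim(1+|x|)^{-\deg P}e^{-\sqrt2\pi\sqrt{|x|}}$; in particular $N$ is bounded on $\R$. But an entire function of order $<1$ (or of order $1$ and minimal type) which is bounded on $\R$ is bounded on $\C$ by Phragm\'en--Lindel\"of applied to each half-plane, hence constant by Liouville. A constant $N$ clearly cannot supply the required decay, so no such multiplier exists. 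This is not a fixable technicality: the decay $e^{-b\pi\sqrt{|x|}}$ on $\R$ is purchased, in the Beurling--Malliavin/Glass construction used in the paper, by atomising a measure of the form $d(at-b\sqrt t)$, and the leading density $a$ of the resulting zeros forces exponential type exactly $a\pi>0$. In the paper one takes $a=\tfrac{T}{2\pi}-1$ and $b=\sqrt2$, and the single multiplier $m=\exp\tilde g(\cdot-i)$ of type $a\pi=T/2-\pi$ is applied to \emph{every} interpolant $I_k^\pm$, $I_k$, $\tilde I_k$. This is exactly where the hypothesis $T>2\pi$ enters: it makes $a>0$, i.e.\ it leaves a strictly positive type budget for the multiplier after the geometric factor $P$ of type $\pi$.

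A secondary consequence of the same misallocation: the auxiliary sine-type factor $S$ that you insert only into the parabolic interpolants to raise their type does not address the problem, because the hyperbolic interpolants $\widehat{\psi_k^+}=c_k^+E_+E_-NP/(z-i\lambda_k^+)$ also contain $E_-$ and hence also need the positive-type multiplier on $\R$ to lie in $L^2(\R)$. The correct architecture is the paper's: one product $P$ vanishing at all of $i\Lambda$ (with the right multiplicities), one multiplier $m$ of type $T/2-\pi$ controlling $P$ on $\R$ and bounded below at the nodes (Proposition~\ref{prop5}), and then the standard $P/(P'(\cdot)(z-\cdot))\cdot m/m(\cdot)$ interpolants, supplemented by the low-degree polynomial corrections at $0,\pm2$ to handle the double eigenvalues. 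The exponents in \eqref{E6}--\eqref{E7} then fall out of Propositions~\ref{prop3} and~\ref{prop5} exactly as you anticipated, but only once $m$ carries positive exponential type.
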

In Proposition \ref{prop1}, $\delta _k^l$ and $\delta _+^-$ denote Kronecker symbols ($\delta _k^l=1$ if $k=l$, 0 otherwise, while
$\delta _+^-=1$ if we have the same signs in the l.h.s of \eqref{E1}, 0 otherwise). The proof of Proposition \ref{prop1} is postponed to Section~\ref{Sec:Section3}.
We assume Proposition \ref{prop1} true for the time being and proceed to the proofs of the main results of the paper.

 \subsection{Proof of Theorem \ref{thm1}}\label{ssec:ProofThm1}

 Pick any pair  $(y_0,\xi _0)\in L^2(\T )^2$ fulfilling \eqref{condthm1}. From \eqref{condini} with $c=-1$,  we have that $v(0)=y_0$, $v_t(0)=\frac{dy_0}{dx} + \xi _0$, so that
 \begin{eqnarray*}
 \gamma _k ^\pm  &=& \langle \frac{d y_0}{dx} + \xi  _ 0, e^{ikx} \rangle  + (\lambda _k^\pm -2ik + k^2 ) \langle y_0 , e^{ikx} \rangle ,\\
 &=& \langle \xi  _ 0, e^{ikx} \rangle  + (\lambda _k^\pm -ik + k^2 ) \langle y_0 , e^{ikx} \rangle ,
 \qquad k\in \Z .
 \end{eqnarray*}
 Let
 \[
 \gamma _k = \gamma _k^\pm \quad \text{ for } k\in \{0,\pm 2\}.
 \]
 The result will be proved if we can construct a control function $h\in L^2(0,T)$ fulfilling \eqref{B8}-\eqref{B8bis}.
 Let us introduce the numbers
 \begin{eqnarray*}
 \alpha _k^\pm &=&  - \beta _k^{-1} e^{\lambda _k^\pm \frac{T}{2}}\gamma _k^\pm , \qquad k\in \Z \setminus \{ 0,\pm 2\},\\
 \alpha _k &=&  - \beta _k^{-1} e^{\lambda _k \frac{T}{2}}\gamma _k , \qquad k\in \{ \pm 2\},\\
 \tilde \alpha _k &=&  - \beta _k^{-1} \left( \frac{T}{2}e^{\lambda _k \frac{T}{2}}   \gamma _k  + e^{\lambda _k \frac{T}{2}} \langle y_0, e^{ikx}\rangle
  \right) ,\qquad k\in \{ \pm 2 \},
 \end{eqnarray*}
and
\[
\psi (t) = \sum_{k\in \Z \setminus\{ 0, \pm 2 \} } \alpha _k ^+ \psi _k ^+ (t) + \sum_{k\in \Z \setminus \{ 0, \pm 2\} } \alpha _k ^- \psi _k ^- (t)  
+ \sum_{k \in \{\pm 2 \} }  [ \alpha _k  \psi _k (t) + \tilde\alpha _k \tilde \psi _k (t) ].
\]
Finally let $h(t)= \psi (\frac{T}{2} -t)$. Note that $h\in L^2(0,T)$ with
\begin{eqnarray*}
||h||_{L^2(0,T)}&=&||\psi ||_{L^2(-\frac{T}{2},\frac{T}{2})}\\
&\le&C\left( \sum_{k\in \{ \pm 2\} } (|d_k|+|c_k|)  + \sum_{k\in\Z \setminus \{ 0, \pm 2\}} |\beta _k|^{-1} (|d_k| + |k|^2 |c_k|)|k|^4 \right.\\
&&\quad\left.  +   \sum_{k\in \Z \setminus \{ 0,\pm 2\} } |\beta _k |^{-1} (|d_k|+|c_k|) |k|^2 e^{-T|k|^2+ 2\sqrt{2}\pi |k|}\right) \\
&<&\infty,
\end{eqnarray*}
by \eqref{condthm1}.
Then it follows from \eqref{condthm1} and \eqref{E1}-\eqref{E5} that for $k\in \Z \setminus \{ 0, \pm 2\}$
\[
\beta _k \int_0^T e^{\lambda _k^\pm (T-t)}h(t)dt = \beta _k e^{\lambda _k^\pm  \frac{T}{2} } \int_{-T/2}^{T/2}e^{\lambda _k ^\pm \tau }\psi (\tau ) \, d\tau = \beta _k e^{\lambda _k ^\pm
\frac{T}{2}} \alpha _k^\pm =  - e^{\lambda _k^\pm T } \gamma _k^\pm.
\]
and also that
\begin{eqnarray*}
\beta _k \int _0^T e^{\lambda _k (T-t)} h(t) dt &=& - e^{\lambda _k T} \gamma _k  \qquad \text{ for } k\in \{ 0,\pm 2\}  ,\\
\beta _k \int_0^T (T-t) e^{\lambda _k (T-t)} h(t)dt &=&-  Te^{\lambda _k T} \gamma _k - e^{\lambda _k T} \langle y_0, e^{ikx}\rangle \qquad
\text{ for } k\in \{0, \pm 2\},
\end{eqnarray*}
as desired. \qed

\subsection{Proof of Theorem \ref{thm2}}\label{ssec:ProofThm2}
Set $\epsilon=(T-2\pi)/2$, $v(x,t)=y(x+t,t)$ and $\xi (x,t)=y_t (x,t)$.
We first steer to $0$ the components of $v$ and $v_t$ along the mode associated to the double
eigenvalue $\lambda_0=0$. Denote $\gamma(t)=\int_{\T }  v(x,t)~dx$ and $\eta(t)=\int_{\T }  v_t(x,t)~dx$.
According to~\eqref{Fourier}, $\gamma(0)= 2\pi c_0$,  $\eta(0)=2\pi d_0$ and
$$
\frac{d\gamma}{dt} = \eta, \quad \frac{d\eta}{dt}= \int_{\omega} \tilde h(x,t) dx.
$$
Take a $C^\infty$ scalar function $\varpi(t)$ on $[0,\epsilon]$ with $\varpi(0)=1$ and $\varpi(\epsilon)=0$  and such
that the support of $d\varpi/dt$ lies inside $[0,\epsilon]$. Consider another $C^\infty$ function of $x$, $\bar b(x)$ with support inside $\omega$ and such that
$\int_\omega \bar  b(x)\, dx=1$. Then the  $C^\infty$ control
$$
\tilde h(x,t)=\bar b(x) \bar h(t)~\text{ with }~\bar h(t)=\frac{d^2}{dt^2} \left( (c_0+d_0 t) \varpi(t)\right)
$$
steers $(\gamma,\eta)$ from $(c_0,d_0)$ at time $t=0$ to $(0,0)$ at time $t=\epsilon$. Its support lies inside $[0,\epsilon]$.
Since $\gamma(\epsilon)=\int_{\T }  y(x,\epsilon)\, dx$ and $\eta(\epsilon)=\int_{\T }  \xi(x,\epsilon)\, dx$, we can assume that $c_0=d_0=0$
up to a time shift of $\epsilon$.

Since $\omega$ is open and nonempty, it contains a small interval $[a,a+2\sigma\pi]$ where $\sigma >0$ is a  {\em quadratic irrational};
 i.e.,  an irrational number which is  a root  of a quadratic equation with integral coefficients. Set for $t\in[\epsilon,T]$
$$
 h(x,t)= \left({\mathbf 1}_{[a,a+\sigma \pi]}(x-t) - {\mathbf 1}_{[a+\sigma\pi,a+2\sigma\pi]}(x-t)\right) \widetilde{h}(t)
$$
where $\widetilde{h}$ denotes a control input  independent of $x$.
Then $b(x-t) h(x,t)= \widetilde{b}(x-t) \widetilde{h}(t)$ where
$$
\widetilde{b}(x)={\mathbf 1}_{[a,a+\sigma \pi]}(x) - {\mathbf 1}_{[a+\sigma\pi,a+2\sigma\pi]}(x)
$$
satisfies $ \int_{\T } \widetilde{b}(x)\, dx =0$. Moreover there exists by Lemma~\ref{lem:IrrationalInterval} (see below) a number
$C>0$ such that for all $k\in\Z^*$
$$
\widetilde{\beta}_k=\int_{\T} \widetilde{b}(x) e^{-ikx} dx\geq \frac{C}{|k|^3}.
$$
According to Theorem~\ref{thm1} we can find $\widetilde h\in L^2(\epsilon,T)$ steering $y(.,\epsilon)$ and
$\xi(.,\epsilon)$ to $y(.,T)=\xi(.,T)=0$ as soon as
$$
\sum_{k\neq 0} \frac{k^6 |\widetilde{c}_k| + k^4 |\widetilde{d}_k|  }{|\widetilde{\beta}_k|} < \infty,
$$
with
$$
y(x,\epsilon)=\sum_{k\in \Z} \widetilde{c}_k e^{ikx}, \quad \xi(x,\epsilon) =\sum_{k\in \Z} \widetilde{d}_k e^{ikx}.
$$
Let $W$ denote the space of the
couples $(\hat y,\hat \xi)\in L^2(\T )^2$ such that $||(\hat y , \hat \xi)||_W:= |\hat c_0| + |\hat d_0| +   \sum_{k\neq 0} (|k|^9 |\hat c_k| + |k|^7 |\hat d_k|) < \infty$, where
$\hat y(x)=\sum_{k\in \Z} \hat {c}_k e^{ikx}$ and $\hat \xi(x) =\sum_{k\in \Z} \hat{d}_k e^{ikx}$. Clearly, $W$ endowed with the norm $||\cdot ||_W$, is a Banach space.
Standard estimations based on the spectral decomposition used to prove Proposition~\ref{prop:cauchy} show that if the initial value $(y_0,\xi _0)$ lies in $W$, then the solution
of  \eqref{cont1}-\eqref{cont1bis} (with $h\equiv 0$)
remains in $W$. Therefore,  since $\sum_{k\neq 0} ( |k|^9 |c_k| + |k|^7 |d_k| ) < \infty$ and since the control is $C^\infty$ with respect to  $x\in\T$ and
$t\in[0,\epsilon]$, we also have $\sum_{k\neq 0} |k|^9  (|\widetilde{c}_k| + |k|^7 |\widetilde{d}_k|) < \infty$~(see e.g.~\cite{CazenH1998,Pazy1983}). Since
$(y_0,\xi _0)\in H^{s+2}(\T )\times H^s(\T ) $ with $s > 15/2$, we have by Cauchy-Schwarz inequality for  $\varsigma= 2s-15 >0$ that
$$
\sum_{k\neq0}  ( |k|^9 |c_k| + |k|^7 |d_k| )  \leq 2  (\sum_{k\neq 0} |k|^{-1-\varsigma} )^{\frac{1}{2}}
(\sum_{k\neq 0} |k|^{19+\varsigma} |c_k|^2 + |k|^{15+\varsigma} |d_k|^2 )^{\frac{1}{2}} < \infty.\qed
$$

\begin{lemma}\label{lem:IrrationalInterval}
   Let $\sigma \in(0,1)$ be a quadratic irrational, 
   and let $\tilde b$, $\tilde\beta_k$ be defined as above.
   Then $\tilde\beta_0=0$ and there exists $C>0$ such that for all $k\in\Z^*$, $|\tilde\beta_k| \geq \frac{C}{|k|^3}$.
\end{lemma}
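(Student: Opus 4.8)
The plan is to compute $\tilde\beta_k$ in closed form, reduce the desired lower bound to a Diophantine inequality for $\sigma$, and then invoke Liouville's theorem on the rational approximation of algebraic numbers.

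First I would perform the (routine) integration. Since $\tilde b$ is the difference of the indicator functions of two \emph{adjacent} intervals of length $\sigma\pi$, setting $I_k=\int_a^{a+\sigma\pi}e^{-ikx}\,dx$ one has $\int_{a+\sigma\pi}^{a+2\sigma\pi}e^{-ikx}\,dx=e^{-ik\sigma\pi}I_k$, hence for $k\ne0$
\[
\tilde\beta_k=(1-e^{-ik\sigma\pi})\,I_k=\frac{e^{-ika}}{ik}\,(1-e^{-ik\sigma\pi})^2 ,
\]
and therefore
\[
|\tilde\beta_k|=\frac{|1-e^{-ik\sigma\pi}|^{2}}{|k|}=\frac{4\sin^2(k\sigma\pi/2)}{|k|}.
\]
For $k=0$ the two intervals have equal length, so $\tilde\beta_0=\sigma\pi-\sigma\pi=0$.

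Next I would bound the sine from below. Using the elementary inequality $|\sin(\pi t)|\ge 2\,\mathrm{dist}(t,\Z)$, valid for every real $t$ (because $\sin$ is concave on $[0,\pi/2]$, so $\sin x\ge\frac{2}{\pi}x$ there), with $t=k\sigma/2$, one obtains $|\tilde\beta_k|\ge 16\,\mathrm{dist}(k\sigma/2,\Z)^2/|k|$. Thus the lemma follows once we produce a constant $A>0$ with $\mathrm{dist}(k\sigma/2,\Z)\ge A/|k|$ for every $k\in\Z\setminus\{0\}$: then $|\tilde\beta_k|\ge 16A^2/|k|^3$ and $C=16A^2$ works.

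Finally, the Diophantine step. The number $\sigma/2$ is again a quadratic irrational: if $\alpha\sigma^2+\beta\sigma+\gamma=0$ with $\alpha,\beta,\gamma\in\Z$ and $\alpha\ne0$, then $4\alpha(\sigma/2)^2+2\beta(\sigma/2)+\gamma=0$, and since $\sigma/2$ is irrational it is algebraic of degree exactly $2$. Liouville's theorem then furnishes $A>0$ such that $|\sigma/2-p/q|\ge A/q^2$ for all integers $p$ and $q\ge1$; applying this with $q=|k|$ and $p$ the integer nearest to $q\sigma/2$ yields $\mathrm{dist}(k\sigma/2,\Z)=\mathrm{dist}(q\sigma/2,\Z)=q\,|\sigma/2-p/q|\ge A/q=A/|k|$, as needed. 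The only substantive ingredient is this last step — that a quadratic irrational is badly approximable — which could equally be deduced from the boundedness of the partial quotients in the continued fraction expansion of $\sigma$; everything else is a short explicit computation, so I do not expect a genuine obstacle.
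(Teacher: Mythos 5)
Your proof is correct and follows essentially the same route as the paper's: both derive the explicit formula $|\tilde\beta_k|=\tfrac{4}{|k|}\sin^2(\tfrac{\pi}{2}k\sigma)$, bound $\sin^2$ from below by a quadratic in the distance to the nearest integer multiple of $\pi$, and invoke the fact that quadratic irrationals are badly approximable (the paper cites Hardy--Wright Theorem~188, you cite Liouville; these are interchangeable here). The only cosmetic difference is that you normalize to $\sigma/2$ and use $\mathrm{dist}(\cdot,\Z)$, whereas the paper works with $\sigma$ directly and splits into two cases according to the position of $\tfrac{\pi}{2}k\sigma$ modulo $\pi$.
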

 \begin{proof}
     Being a quadratic irrational, $\sigma$ is approximable by rational numbers to order 2 and to no higher order \cite[Theorem 188]{Hardy}); i.e., there exists
     $C_0>0$ such that for any  integers $p$ and $q$, $q\neq 0$, $\left|\sigma - \frac{p}{q} \right| \geq \frac{C_0}{q^2}$.
     On the other hand, $|\tilde\beta_k|=\frac{4}{|k|}\sin^2(\frac{\pi}{2}k\sigma)$ for $k\neq 0$. Pick any $k\ne 0$, take $p\in\mathbb Z$ such that $0\leq \frac{\pi}{2}k\sigma - p\pi <\pi$
     and use the elementary inequality $\sin^2\theta \geq \frac{4\theta^2}{\pi^2}$ valid for  $\theta\in[-\frac{\pi}{2},\frac{\pi}{2}]$. Then two cases occur.
  \begin{enumerate}
     \item[(i)] If $0\leq\tfrac{\pi}{2}k\sigma - p\pi \leq\tfrac{\pi}{2}$, then
  \[ \sin^2(\tfrac{\pi}{2}k\sigma)=
  \sin^2(\tfrac{\pi}{2}k\sigma-p\pi)\geq \tfrac{4}{\pi^2}(\tfrac{\pi}{2}k\sigma-p\pi)^2 = k^2
  \left( \sigma - \tfrac{2p}{k}\right)^2 \geq \tfrac{C_0^2}{k^2} ;\]
      \item[(ii)] If $-\tfrac{\pi}{2}\leq\tfrac{\pi}{2}k\sigma - (p+1)\pi \leq0$, then
  \[ \sin^2(\tfrac{\pi}{2}k\sigma-(p+1)\pi)\geq \tfrac{4}{\pi^2}(\tfrac{\pi}{2}k\sigma-(p+1)\pi)^2 = k^2
  \left( \sigma - \tfrac{2(p+1)}{k}\right)^2 \geq \tfrac{C_0^2}{k^2}. \]
  \end{enumerate}
 The lemma follows with $C=4C_0^2$.
\end{proof}

\subsection{Proofs of Theorem \ref{thm3} and Theorem \ref{thm4}} \label{ssec:ProofThm34}

The proofs are the same as for Theorem \ref{thm1}, with the obvious estimate
\[
\sum_{|k|>2} |k|^p(|k|^2|c_k|+|d_k|) \le C_{\varepsilon} \left( \sum_{|k|>2}\{  |k|^{2p+5+\varepsilon}|c_k|^2 + |k|^{2p+1+\varepsilon}|d_k|^2\right) ^{\frac{1}{2}}
\]
for $p\in \{ 3,4\}$, $\varepsilon >0$. \qed

\section{Proof of Proposition  \ref{prop1}} \label{Sec:Section3}

This section is devoted to the proof of Proposition \ref{prop1}. The method of proof is inspired from the one in \cite{FR,Glass,LK}. We first introduce
an entire function vanishing precisely at the $i\lambda _k^\pm$'s, namely the canonical product
\be
P(z)=z(1-\frac{z}{i\lambda _2})(1- \frac{z}{i\lambda _{-2}})\prod_{k\in \Z \setminus \{0, \pm 2\}} (1-\frac{z}{i\lambda _k^+})
\prod_{k\in\Z\setminus \{0,\pm 2\} } (1-\frac{z}{i\lambda _k^- })\cdot
\label{P1}
\ee
Next, following \cite{BM,Glass}, we construct a multiplier $m$ which is an entire function that does not vanish at the $\lambda _k^\pm$'s, such that
$P(z)m(z)$ is bounded for $z$  real while $P(z)m(z)$ has (at most) a polynomial growth in $z$ as $|z|\to \infty$ on each line $\text{Im } z=const.$
Next, for $k\in \Z \setminus \{ 0,\pm 2\}$ we construct a function $I_k^\pm$ from $P(z)$ and $m(z)$
and we define $\psi _k^\pm$ as the inverse Fourier transform of $I_k^\pm$. The other $\psi_k$'s are constructed in a quite similar way.
The fact that $\psi _k^\pm$ is compactly supported in time is a consequence of
Paley-Wiener theorem.

\subsection{Functions of  type sine}

To estimate carefully $P(z)$, we use the theory of functions of type sine (see e.g. \cite[pp. 163--168]{Levin} and \cite[pp. 171--179]{Young}).
\begin{definition}
\label{def1}
An entire function $f(z)$ of exponential type $\pi$ is said to be of {\em type sine} if
\begin{enumerate}
\item [(i)] The zeros $\mu _k$ of $f(z)$ are separated; i.e., there exists $\eta>0$ such that \\ $|\mu _k-\mu _l |\ge \eta\qquad k\ne l$;
\item[(ii)]   There exist positive constants $A,B$ and $H$ such that
\be
Ae^{\pi |y|} \le |f(x+iy)| \le Be^{\pi |y|}\quad \forall x\in\R,\ \forall y\in \R\ \  \text{with } |y|\ge H.\label{C1}
\ee
\end{enumerate}
\end{definition}
Some of the most important properties of an entire function of type sine are gathered in the following
\begin{proposition} (see \cite[Remark and Lemma  2 p. 164]{Levin}, \cite[Lemma 2 p. 172]{Young})
\label{propsine}
Let $f(z)$ be an entire  function of type sine, and let $\{ \mu _k \}_{k\in J}$ be the sequence of its zeros, where  $J\subset \Z$. Then
\begin{enumerate}
\item For any $\varepsilon >0$, there exist some constants  $C_\varepsilon , C_\varepsilon ' >0$ such that
\[
C_\varepsilon e^{\pi |\text{Im } z| }
\le  |f(z)| \le C_\varepsilon' e^{\pi |\text{Im }z|}\quad \text{ if } \text{dist}\{z,\{\mu _k\}  \} > \varepsilon.
\]
\item There exist some constants $C_1,C_2$ such that
$$
0<C_1 <|f ' (\mu _k) |<C_2<\infty \quad \forall k \in J.
$$
\end{enumerate}
\end{proposition}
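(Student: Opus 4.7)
The plan is to establish (1) first, treating the upper and lower bounds separately, and then derive (2) from it.

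For the upper bound in (1), the hypothesis \eqref{C1} gives $|f(x\pm iH)|\le Be^{\pi H}$ on the horizontal lines $y=\pm H$, while the exponential-type-$\pi$ assumption yields $|f(z)|\le Ce^{\pi|z|}$, which is at most single exponential in $|\text{Re } z|$ inside the strip $|y|\le H$. This is far below the double-exponential growth allowed by Phragm\'en--Lindel\"of in a strip of finite width, so the principle applies and gives $|f(z)|\le Be^{\pi H}$ in the whole strip. Combined with \eqref{C1} for $|y|\ge H$, this yields $|f(z)|\le C' e^{\pi|\text{Im } z|}$ on all of $\C$, with a constant independent of $\varepsilon$.

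The lower bound in (1) is the main obstacle. Outside the strip $|y|\le H$ it is already guaranteed by \eqref{C1}, so I need only control $f$ on $K_\varepsilon:=\{z:\,|\text{Im } z|\le H,\ \text{dist}(z,\{\mu_k\})>\varepsilon\}$. I would argue by contradiction via a normal-family argument. If the estimate fails, there is a sequence $z_n=x_n+iy_n\in K_\varepsilon$ with $|f(z_n)|e^{-\pi|y_n|}\to 0$ and, passing to a subsequence, $y_n\to y_\infty$ with $|y_\infty|\le H$. Consider the translates $g_n(z):=f(z+x_n)$; they share the exponential type $\pi$ of $f$ and the uniform upper bound $|g_n(z)|\le C'e^{\pi|\text{Im } z|}$ just established, and their zero sets are the $\eta$-separated sequences $\{\mu_k-x_n\}_k$. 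Montel's theorem extracts a locally uniform limit $g$, entire of exponential type $\le\pi$. The lower bound from \eqref{C1} for $|y|\ge H$ passes to the limit, so $g\not\equiv 0$. However, $g_n(iy_n)=f(z_n)\to 0$ forces $g(iy_\infty)=0$, and Hurwitz's theorem then produces zeros of $g_n$ arbitrarily close to $iy_\infty$ for large $n$. This contradicts the fact that the zeros of $g_n$ avoid $D(iy_n,\varepsilon)$---a disk which, for large $n$, contains $D(iy_\infty,\varepsilon/2)$---by the very definition of $K_\varepsilon$. The separation of zeros is crucial here: it is what guarantees the omitted disks do not shrink as $n\to\infty$.

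For (2), observe first that every zero satisfies $|\text{Im }\mu_k|\le H$, since otherwise \eqref{C1} would force $|f(\mu_k)|>0$. The upper bound $|f'(\mu_k)|\le C_2$ then follows from Cauchy's formula applied on the circle $|z-\mu_k|=\eta/2$, combined with the uniform upper bound on $|f|$ proved in (1). For the lower bound, factor $f(z)=(z-\mu_k)h_k(z)$ locally; by the separation hypothesis, $h_k$ is holomorphic and non-vanishing on $D(\mu_k,\eta/2)$, with $h_k(\mu_k)=f'(\mu_k)$. The minimum modulus principle then yields
\[
|f'(\mu_k)|=|h_k(\mu_k)|\ge \min_{|z-\mu_k|=\eta/4}\frac{|f(z)|}{\eta/4},
\]
and part (1) applied with $\varepsilon=\eta/4$ bounds the right-hand side from below by a positive constant $C_1$ depending only on $\eta$ and $H$, uniformly in $k$.
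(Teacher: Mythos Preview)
The paper does not prove this proposition; it is quoted from Levin and Young with references only. Your argument is therefore not being compared against a proof in the paper, but it stands on its own and is essentially correct.

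A few small remarks. In the upper bound for (1), your Phragm\'en--Lindel\"of step is fine: exponential type~$\pi$ gives growth $O(e^{\pi|x|})$ along the strip, well below the double-exponential threshold, so the boundary bound $Be^{\pi H}$ propagates. For the lower bound, the normal-family/Hurwitz contradiction works as written; note however that the ingredient you actually use is the hypothesis $\text{dist}(z_n,\{\mu_k\})>\varepsilon$, not the separation of the $\mu_k$ from one another. The separation condition (i) plays no role in (1); your closing sentence there is misplaced. Where (i) \emph{is} essential is in (2): it guarantees that $\mu_k$ is a simple zero and that $h_k(z)=f(z)/(z-\mu_k)$ is non-vanishing on $D(\mu_k,\eta/2)$, which is exactly what you need for the minimum-modulus step. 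One last nitpick: on the circle $|z-\mu_k|=\eta/4$ the distance to the zero set equals $\eta/4$, not strictly exceeds it, so apply (1) with $\varepsilon$ slightly smaller than $\eta/4$.

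Compared to the textbook proofs (Levin works via the indicator diagram and canonical-product estimates; Young's argument is closer to yours), your compactness approach is a clean ``soft'' alternative that avoids any explicit product manipulation.
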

Finally, we shall need the following result.
\begin{theorem} (see \cite[Corollary p. 168 and Theorem 2 p. 157]{Levin}
\label{thmsine}
Let $\mu _k=k+d_k$ for $k\in\Z$, with $\mu _0=0$, $\mu_k\ne 0$ for $k\ne 0$,  and $(d_k)_{k\in \Z}$ bounded, and let
\[
f(z)=z\prod_{k\in \Zs} (1-\frac{z}{\mu _k})=\lim_{K\to \infty } z\prod_{k\in \{-K,...,K\} \setminus \{ 0 \}  } (1-\frac{z}{\mu _k})\cdot
\]
Then $f$ is a function of type sine if, and only if, the following three properties are satisfied:
\begin{enumerate}
\item[(1)] $\inf_{k\ne l} |\mu_k-\mu _l |  >0$;
\item[(2)] There exists some constant $M>0$ such that
\[ \left\vert \sum_{k\in \Z} (d_{k+\tau} -d_k)\frac{k}{k^2+1}\right\vert \le M\quad \forall \tau \in \Z; \]
\item[(3)] $\displaystyle \limsup_{y\to +\infty} \frac{\log |f(iy)|}{y} =\pi ,\quad \limsup_{y\to -\infty} \frac{\log |f(-iy)|}{|y|} =\pi$.
\end{enumerate}
\end{theorem}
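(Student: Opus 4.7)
The plan is to prove the equivalence by comparing $f$ to the canonical sine-type model $g(z):=\sin(\pi z)/\pi = z\prod_{k\in\Zs}(1-z/k)$, whose zeros are exactly $\Z$, and exploiting the fact that the zeros $\mu_k=k+d_k$ of $f$ are small perturbations of the integers.

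For the necessity direction, assume $f$ is of type sine. Property~(1) is just a restatement of the separation condition in Definition~\ref{def1}(i). Property~(3) is obtained by specializing \eqref{C1} to $x=0$: for $|y|\ge H$ one has $A\le|f(iy)|e^{-\pi|y|}\le B$, so $\log|f(iy)|=\pi|y|+O(1)$, which yields $\limsup_{y\to\pm\infty}\log|f(\pm iy)|/|y|=\pi$. For property~(2), I would compare the Hadamard products for $f$ and $g$:
\[
\log\frac{|f(iy)|}{|g(iy)|}=\sum_{k\in\Zs}\log\left|\frac{1-iy/\mu_k}{1-iy/k}\right|.
\]
The left-hand side is $O(1)$ in $y$ because both $|f(iy)|$ and $|g(iy)|$ equal $e^{\pi|y|}$ up to bounded multiplicative factors. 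An Abel-type summation, combined with the asymptotic behavior of $\log|1-iy/k|$ in $k$, isolates the weights $k/(k^2+1)$; applying the same argument to a horizontal translate by $\tau\in\Z$ (which effectively shifts $d_k$ to $d_{k+\tau}$) and taking the difference produces~(2).

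For the sufficiency direction, assume (1)--(3). First, the boundedness of $d_k$ together with the separation (1) ensures that the product defining $f$ converges to an entire function of exponential type at most $\pi$, by classical density estimates on the zeros. The upper bound in \eqref{C1} would then follow from~(3) via a Phragm\'en--Lindel\"of argument on the half-planes $\{\mathrm{Im}\,z>H\}$ and $\{\mathrm{Im}\,z<-H\}$, together with (1) to rule out blow-up near the real axis outside small disks around the $\mu_k$'s. The lower bound in \eqref{C1} is the delicate step: I would show that condition~(2) forces the Phragm\'en--Lindel\"of indicator $h_f(\theta)=\limsup_{r\to\infty}r^{-1}\log|f(re^{i\theta})|$ to equal $\pi|\sin\theta|$ \emph{uniformly in the angle}, i.e.\ $f$ has completely regular growth in the sense of Levin; the lower bound on the imaginary axis granted by~(3) then propagates to every horizontal line $\mathrm{Im}\,z=\mathrm{const}$ with $|\mathrm{Im}\,z|\ge H$.

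The heart of the matter, and the main obstacle, is the precise role of condition~(2): it is exactly the quantitative statement needed to guarantee that the logarithmic phase accumulated by $\prod(1-z/\mu_k)$ along horizontal shifts differs from that of $g(z)$ by only a bounded amount, which in turn is what forces completely regular growth and hence the two-sided bound \eqref{C1}. Translating (2) into a uniform estimate on $\log|f(x+iy)|-\log|f(iy)|$ for $|y|\ge H$ and $x\in\R$ requires the full machinery of Levin's indicator theorem and cannot be done by elementary means. For the purposes of the present paper the theorem is invoked only to certify that the canonical product $P(z)$ in \eqref{P1} is of type sine, so in practice I would quote \cite[Theorem~2 p.~157 and Corollary p.~168]{Levin} for the equivalence itself and instead verify conditions (1)--(3) directly for $P(z)$ using the asymptotics \eqref{A16}--\eqref{A17}.
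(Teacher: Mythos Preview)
The paper does not give a proof of this theorem: it is stated with the citation ``see \cite[Corollary p.~168 and Theorem~2 p.~157]{Levin}'' and used as a black box, and the paper moves straight on to Corollary~\ref{cor1}. So there is nothing on the paper's side to compare your sketch against; your final paragraph already arrives at exactly the paper's policy, namely to quote Levin for the equivalence and then verify conditions (1)--(3) by hand for the concrete canonical products at stake (this is precisely what the proof of Corollary~\ref{cor1} does).

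Your outline of the argument is plausible in spirit --- comparing $f$ to $\sin(\pi z)/\pi$, reading (1) and (3) off the sine-type estimates, and using (2) to control the logarithmic phase under integer shifts --- but, as you yourself acknowledge, the sufficiency direction (in particular deducing the two-sided bound \eqref{C1} from (2) via completely regular growth) is not something you actually carry out, and it genuinely requires the machinery in Levin. So the honest status of your proposal is the same as the paper's: cite \cite{Levin} for Theorem~\ref{thmsine} and spend the effort on checking (1)--(3) for the specific sequences $\{-i\lambda_k^+\}$ and $\{\mu_k\}$, which is where the asymptotics \eqref{A16}--\eqref{A17} enter.
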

\begin{corollary}
\label{cor1}
Assume that $\mu_k=k+d_k$, where $d_0=0$ and $d_k=d+O(k^{-1})$ as $|k|\to \infty$  for some constant $d\in \C$, and that $\mu_k\ne \mu _l$ for
$k\ne l$. Then $f(z)=z\prod_{k\in \Zs} (1-\frac{z}{\mu _k})$ is an entire function of type sine.
\end{corollary}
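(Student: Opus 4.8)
The plan is to deduce the corollary from Theorem~\ref{thmsine}, so the whole task is to check its three hypotheses for the sequence $\mu_k=k+d_k$. First a few preliminaries: since $d_k\to d$ the sequence $(d_k)$ is bounded, say $|d_m|\le M_0$ for all $m$; the assumption $\mu_k\ne\mu_l$ for $k\ne l$ gives in particular $\mu_k\ne 0$ for $k\ne 0$, so Theorem~\ref{thmsine} applies; and $\mu_k^{-1}+\mu_{-k}^{-1}=(d_k+d_{-k})/(\mu_k\mu_{-k})=O(k^{-2})$, so the symmetric product defining $f$ converges and $f$ is a genuine entire function.

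For condition~(1), write $\mu_k-\mu_l=(k-l)+(d_k-d_l)$. If $|k-l|\ge 2M_0+1$ then $|\mu_k-\mu_l|\ge 1$. For pairs with $|k-l|\le 2M_0$ and $\min(|k|,|l|)$ large, $d_k-d_l\to 0$ since both $d_k$ and $d_l$ tend to $d$, so $|\mu_k-\mu_l|\ge\tfrac12$; the finitely many remaining pairs have $\mu_k\ne\mu_l$, hence a positive minimum. Thus $\inf_{k\ne l}|\mu_k-\mu_l|>0$. For condition~(2), use $d_k=d+O(|k|^{-1})$: for $k\ne 0,-\tau$,
\[
\Bigl|(d_{k+\tau}-d_k)\tfrac{k}{k^2+1}\Bigr|\le|d_{k+\tau}-d|\tfrac{|k|}{k^2+1}+|d_k-d|\tfrac{|k|}{k^2+1}\le\frac{C}{|k|\,|k+\tau|}+\frac{C}{k^2+1},
\]
while the $k=0$ term vanishes and the $k=-\tau$ term is $\le 2M_0$. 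Summing and invoking the elementary bound $\sum_{k\ne 0,-\tau}(|k|\,|k+\tau|)^{-1}=O\bigl(|\tau|^{-1}\log(|\tau|+2)\bigr)$ (split the sum according to $|k|\lessgtr|\tau|/2$), the left-hand side of condition~(2) is bounded uniformly in $\tau$.

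Condition~(3) is the heart of the matter, and I would treat it by comparison with $\sin(\pi z)/\pi=z\prod_{k\in\Zs}(1-z/k)$. Writing $d_k=a_k+ib_k$ with $a_k\to a$, $b_k\to b$ and $a_k-a,\,b_k-b=O(|k|^{-1})$, one has for $y>0$
\[
\log|f(iy)|-\log\frac{\sinh\pi y}{\pi}=-\frac12\sum_{k\in\Zs}\log\frac{|\mu_k|^2}{k^2}+\frac12\sum_{k\in\Zs}\log\Bigl(1+\frac{2ka_k-2yb_k+|d_k|^2}{k^2+y^2}\Bigr).
\]
The first sum converges to a $y$-independent constant, its $k$-th term being $\tfrac{2a}{k}+O(k^{-2})$ and $\sum_{k\in\Zs}k^{-1}=0$ by antisymmetry. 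In the second sum, denoting by $u_k$ the fraction appearing there, one checks $\sup_k|u_k|=O(y^{-1})$ and $\sum_k u_k^2=O(y^{-1})$ (since $u_k^2\le C(k^2+y^2)^{-1}$), so $\sum_k\log(1+u_k)=\sum_k u_k+O(y^{-1})$; and $\sum_k u_k=O(1)$ because the $2ka_k$ part is $O(y^{-1})$ thanks to $2a\sum_{k\in\Zs}k/(k^2+y^2)=0$ together with the boundedness of $k(a_k-a)$, the $-2yb_k$ part equals $-2\pi b\coth(\pi y)+O(y^{-1}\log y)=O(1)$ via $\sum_{k\in\Z}(k^2+y^2)^{-1}=\pi y^{-1}\coth(\pi y)$, and the $|d_k|^2$ part is $O(y^{-1})$. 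Hence $\log|f(iy)|=\log\frac{\sinh\pi y}{\pi}+O(1)$, and the identical computation with $iy$ replaced by $-iy$ gives $\log|f(-iy)|=\log\frac{\sinh\pi y}{\pi}+O(1)$; dividing by $y$ and letting $y\to+\infty$ yields the two equalities in condition~(3).

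The main obstacle is precisely condition~(3): crude absolute-value estimates on the two sums diverge, so one must exploit the $O(|k|^{-1})$ decay of $d_k-d$ jointly with the exact cancellations $\sum_{k\in\Zs}k^{-1}=0$ and $\sum_{k\in\Zs}k/(k^2+y^2)=0$; the same mechanism, in the weaker form already used for condition~(2), is what makes that bound uniform in $\tau$.
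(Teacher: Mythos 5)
Your proposal is correct, and conditions (1) and (2) are handled essentially as in the paper — condition (1) by inspection, condition (2) by a slightly different but equivalent summation argument (the paper bounds $\sum_k(d_{k+\tau}-d_k)^2$ via the $\ell^2$ bound on $e_k=d_k-d$ and then applies Cauchy--Schwarz, whereas you estimate each term directly and sum $\sum_{k\neq 0,-\tau}(|k|\,|k+\tau|)^{-1}$; both give uniformity in $\tau$). Your treatment of condition (3) is a genuinely different route. The paper groups conjugate factors $(1-z/\mu_k)(1-z/\mu_{-k})$, introduces the counting function $n(r)=\#\{k:|\mu_k\mu_{-k}|\le r\}$, shows $n(r)=\sqrt{r}+O(1)$, and evaluates $\sum_k\log(1+\rho/|\mu_k\mu_{-k}|)$ as a Stieltjes integral $\int_0^\infty\log(1+\rho/t)\,dn(t)$ whose principal term is $\pi\sqrt{\rho}$. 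You instead compare $f$ directly with $\sin(\pi z)/\pi$ and show $\log|f(iy)|=\log(\sinh\pi|y|/\pi)+O(1)$ by Taylor-expanding $\log(1+u_k)$ and exploiting the antisymmetry cancellations $\sum_{k\in\Zs}k^{-1}=0$ and $\sum_{k\in\Zs}k/(k^2+y^2)=0$ together with the $O(k^{-1})$ rate on $d_k-d$. Your method is more elementary and actually yields a sharper two-sided bound $\pi|y|+O(1)$ (hence reproves part (ii) of Definition~\ref{def1} directly), at the cost of leaning more heavily on the precise $O(k^{-1})$ hypothesis; the paper's counting-function argument is the more classical device for densities of zeros and would tolerate somewhat weaker assumptions on the perturbations $d_k$, needing only $\mu_k\mu_{-k}\sim k^2$. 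Both are valid derivations of the corollary.
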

\begin{proof}
We check that the conditions (1), (2) and (3) in Theorem \ref{thmsine} are fulfilled. \\
(1) From $\mu _k -\mu _l =k-l +O(k^{-1}, l^{-1})$ and the fact that $\mu_k-\mu _l \ne 0$ for $k\ne l$, we infer that (1) holds.\\
(2) Let us write $d_k=d+e_k$ with $e_k=O(k^{-1})$. Then for all $\tau\in \Z$
\[
\left(  \sum_{k\in \Z} (d_{k+\tau } -d_k)^2\right) ^{\frac{1}{2}} \le 2 \left(\sum_{k\in \Z } |e_k|^2\right) ^{\frac{1}{2}} <\infty .
\]
Therefore, for any $\tau \in \Z$, by Cauchy-Schwarz inequality
\begin{eqnarray*}
\left\vert
\sum_{k\in \Z} (d_{k+\tau} -d_k)\frac{k}{k^2+1}
\right\vert
&\le& \left(\sum_{k\in \Z}  (d_{k+\tau} -d_k)^2\right)^{\frac{1}{2}} \left(\sum_{k\in \Z} (\frac{k}{k^2+1} )^2 \right)^\frac{1}{2} \\
&\le& 2\left(\sum_{k\in \Z}|e_k|^2\right)^{\frac{1}{2}} \left(\sum _{k\in \Z} (\frac{k}{k^2+1})^2 \right)^{\frac{1}{2}}  =:M <\infty.
\end{eqnarray*}
(3) We first notice that
\[
f(z) = z\prod_{k=1}^\infty (1-\frac{z}{\mu_k })(1-\frac{z}{\mu_{-k}}) \cdot
\]
Let $z=iy$, with $y\in \R$. Then
\[
(1-\frac{z}{\mu _k})(1-\frac{z}{\mu _{-k}}) = 1 -\frac{y^2 +i(\mu_k +\mu _{-k}) y}{\mu _k\mu_{-k}}
\]
with
\[
\mu_k\mu_{-k} = -k^2 + O(k),\quad \mu_k + \mu_{-k} =2d + O(k^{-1}).
\]
It follows that for any given $\varepsilon \in (0,1)$, there exist $k_0\in \N ^*$ and some  numbers $C_1,C_2>0$ such that
\be
\label{Z1}
1+\frac{(1-\varepsilon) y^2-C_1 |y| }{|\mu_k\mu_{-k}|}
\le \left\vert   (1-\frac{z}{\mu_k})(1-\frac{z}{\mu_{-k}})  \right\vert \le 1+\frac{y^2+C_2|y| }{ | \mu_k\mu_{-k} | }
\ee
for $y\in \R$ and $k\ge k_0>0$.
Let
$$n(r):=\# \{ k\in \N ^*; \ |\mu_k\mu_{-k}|\le r \}.$$
Since $|\mu_k\mu_{-k} | \sim k^2$ as $k\to\infty$ and $\mu _k\ne 0$ for $k\ne 0$, we obtain  that
\ba
\sqrt{r} - C_3 \le n(r) \le \sqrt{r} +C_3\quad &&\text{ for }  r> 0, \\
n(r)=0 \quad &&\text{ for } 0<r<r_0,
\ea
for some constants $C_3>0$, $r_0>0$.
It follows that
\begin{eqnarray*}
\limsup_{|y|\to \infty} \frac{\log | f (iy) | }{|y|} &\le& \limsup _{|y|\to +\infty}
|y|^{-1} \sum_{k=1}^\infty \log \left\vert  (1-\frac{iy}{\mu_k})(1-\frac{iy}{\mu_{-k}})  \right\vert\\
&\le& \limsup _{|y|\to \infty }|y|^{-1} \sum_{k=1}^\infty \log (1+\frac{y^2+C_2|y|}{|\mu_k\mu_{-k}| }),
\end{eqnarray*}
where we used the fact that
\[
\lim_{|y|\to \infty} |y|^{-1} \log \left\vert  1 - \frac{iy}{\mu_{\pm k}} \right\vert =0 \quad \text{ for } 1\le k\le k_0.
\]
On the other hand, setting $\rho=y^2+C_2|y|\ge 0$, we have that
\begin{eqnarray*}
\sum_{k=1}^\infty \log (1+\frac{\rho}{|\mu_k\mu_{-k}|})
&=& \int_0^\infty \log (1+\frac{\rho}{t})\, dn(t) \\
&=& \rho \int_0^\infty \frac{n(t)}{t(t+\rho)}\, dt \\
&=& \int_0^\infty \frac{n(\rho s)}{s(s+1)}\, ds\\
&\le& \sqrt{\rho} \int_0^\infty \frac{ds}{\sqrt{s}(s+1)}+C_3\int_{r_0/\rho}^\infty \frac{ds}{s(s+1)}\\
&\le& |y|\sqrt{1+C_2|y|^{-1}}\pi +C_3 \log \left( 1+r_0^{-1} (y^2+C_2|y|) \right) .
\end{eqnarray*}
Thus
\[
\limsup_{|y|\to \infty}  \frac{\log | f(iy) | }{ |y| } \le \pi.
\]
Using again \eqref{Z1}, we obtain by the same computations that
\[
\limsup_{y\to +\infty} \frac{\log | f (iy) |}{y}\ge \pi, \ \text{ and } \limsup_{y\to -\infty} \frac{\log | f (iy) | }{ |y| } \ge \pi.
\]
The proof of (3) is completed.
\end{proof}
In what follows, $\text{arg } z$ denotes the principal argument of any complex number $z\in \C \setminus \R ^-$; i.e., $\text{arg }z\in (-\pi, \pi )$, and
\[
\log z = \log |z| +i\, \text{arg } z,\quad  \sqrt{z} = \sqrt{|z|} \, e^{i\frac{\text{arg } z}{2}}.
\]

We introduce, for $k\in \Zs$,
\[
\mu_k= \text{sgn} (k) \sqrt{ -\lambda _k^-} =k \sqrt{ \frac{1+\sqrt{1-4k^{-2} }}{2}  -ik^{-1}}=:k+d_k, \ k\in \Z
\]
with
\[
d_k=-\frac{i}{2} +O(k^{-1}).
\]
and $\mu _0=0$.
Let
\begin{eqnarray}
P_1(z) &=&z\prod_{k\in \Zs}  (1+ \frac{z}{i\lambda _k^+}  ), \label{C2a}\\
P_2(z) &=&z\prod_{k\in \Zs}  (1+  \frac{z}{i\lambda _k^-}  ),   \label{C2b}\\
P_3(z) &=& z^2 \prod _{k\in \Zs} (1+ \frac{z^2}{\lambda_k^-}   ), \label{C2c}\\
\text { and } \qquad P_4(z) &=& z\prod _{k\in \Z \setminus \{ 0 \} }  (1- \frac{z}{\mu_k }  ).  \label{C2d}
\end{eqnarray}
It follows from \eqref{A17} that the convergence in \eqref{C2b} is uniform in $z$ on each compact set of $\C$, so that
$P_2$ is an entire function. Note also that
\ba
P_2(z) &=& iP_3(e^{-i\frac{\pi}{4}} \sqrt{z} ), \label{C5a}\\
P_3(z) &=& -P_4(z)P_4(-z), \label{C5b} \\
P(z)     &=&\frac{ P_1(-z) P_2(-z) }{ z (1-\frac{z}{i\lambda _2} )( 1-\frac{z}{i\lambda _{-2}} )} \cdot \label{Q1}
\ea
Applying Corollary \ref{cor1} to $P_1$, noticing that
\[
-i\lambda_k^+ =k+i+O(k^{-2})
\]
with $\lambda_k^+\ne \lambda_l^-$ for $k\ne l$, and $\lambda _0^+=0$, we infer that $P_1(z)$ is an entire function of sine type.
Thus, for given $\varepsilon >0$ there are some positive constants $C_4,C_5,C_6$ such that
\ba
C_4e^{\pi |y|} \le  |P_1(x+iy)|   &\le& C_5 e^{\pi |y|}, \quad  \text{dist } (x+iy, \{ -i\lambda _k^+ \} ) > \varepsilon   \label{C6}\\
|P_1'(-i\lambda_k^+)|   &\ge& C_6, \quad k\in \Z.  \label{C7}
\ea
Next, applying Corollary \ref{cor1} to $P_4$, noticing that
\[
\mu_k=k-\frac{i}{2} +O(k^{-1})
\]
with $\mu_k\ne \mu_l$ if $k\ne l$ and $\mu_0=0$, we infer that $P_4(z)$ is also an entire function of sine type. In particular, it is of
exponential type $\pi$
\be
|P_4(z) | \le C e^{\pi |z|},\qquad z\in \C.
\ee
Therefore, we have for any $\varepsilon >0$ and for some positive constants $C_7,C_8,C_9$
\ba
C_7e^{\pi |y|}  \le |P_4(x+iy)| &\le &C_8 e^{\pi |y|},\quad \text{dist } (x+iy,\{\mu _k\})>\varepsilon  \label{C8}\\
|P_4'(\mu _k)| &\ge& C_9,\quad k\in \Z. \label{C9}
\ea
In particular, $P_3$ is an entire function of exponential type $2\pi$ with
\be
C_7^2  e^{2\pi |y|}  \le |P_3(x+iy)| \le C_8^2 e^{2\pi |y|} \qquad \text{dist }(\pm ( x+iy) ,\{\mu _k  \} ) >\varepsilon \label{C10}.
\ee
Combined to \eqref{C5a}, this yields
\be
|P_2(z)|\le C e^{2\pi \sqrt{|z|}}\qquad z\in \C.
\ee
Substituting $e^{-i\frac{\pi}{4}}\sqrt{z}$ to $x+iy$ in \eqref{C10} yields
\be
C_7^2 \exp( 2\pi |\text{Im} (e^{-i\frac{\pi}{4}}\sqrt{z})| )
\le |P_2(z)| \le
C_8^2 \exp( 2\pi |\text{Im} (e^{-i\frac{\pi}{4}}\sqrt{z} )| )\quad
\text{dist} (\pm e^{-i\frac{\pi}{4}}\sqrt{z},\{ \mu _k \} ) >\varepsilon.
\label{C12}
\ee
From \eqref{C12} (applied for $x$ large enough)  and the continuity of $P_2$ on $\C$, we obtain that
\be
|P_2(x)|  \le C e^{\sqrt{2}\pi \sqrt{ |x| } }.\label{Q2}
\ee
We are now in a position to give bounds for the canonical product $P$ in \eqref{P1}.
\begin{proposition}
\label{prop3}
The canonical product $P$ in \eqref{P1} is an entire function of exponential type at most  $\pi$. Moreover, we have
for some constant $C>0$
\ba
|P(x)| &\le & C (1+ |x|) ^{-3}  e^{\sqrt{2}\pi \sqrt{|x|} }, \qquad x\in \R , \label{C13}\\
|P'( i \lambda _k^+ )| &\ge&  C^{-1} |k|^{-3} e^{\sqrt{2} \pi \sqrt{ |k| }  }  \qquad k\in \Z\setminus \{ 0,\pm 2 \}, \label{C14a}\\
|P'(i\lambda _k^- ) |   &\ge&  C^{-1} |k|^{-7} e^{\pi k^2} \qquad  k\in \Z \setminus \{ 0, \pm 2 \}.\label{C14b}
\ea
\end{proposition}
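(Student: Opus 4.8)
The plan is to read off all the assertions of Proposition~\ref{prop3} from the factorization~\eqref{Q1}, together with the information already gathered on the sine-type functions $P_1$ and $P_4$, on the bound $|P_2(z)|\le Ce^{2\pi\sqrt{|z|}}$, and on the asymptotics~\eqref{A16}--\eqref{A17}. First one notes that $P$ is entire: by~\eqref{Q1} it is the quotient of the entire function $z\mapsto P_1(-z)P_2(-z)$ by the polynomial $z(1-\tfrac{z}{i\lambda_2})(1-\tfrac{z}{i\lambda_{-2}})$, every zero of which is a zero of the numerator of at least the same multiplicity. For the exponential type, $P_1$ is of sine type hence of exponential type $\pi$, while $|P_2(z)|\le Ce^{2\pi\sqrt{|z|}}$ shows $P_2$ has order $\le 1/2$, hence exponential type $0$; thus $z\mapsto P_1(-z)P_2(-z)$ is of exponential type $\pi$, and since for $|z|$ large the denominator has modulus $\ge 1$ one gets $|P(z)|\le|P_1(-z)P_2(-z)|$ there, so $P$ has exponential type at most $\pi$.

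To obtain~\eqref{C13}, I would use that $z=0$ is the only real zero of $P_1$, whereas by~\eqref{A16} every other zero $-i\lambda_k^+$ of $P_1$ stays at a fixed positive distance from $\R$. Hence for $|x|\ge1$ the estimate~\eqref{C6} gives $|P_1(-x)|\le C$, while for $|x|\le1$ the factor $z$ in $P_1$ makes $|P_1(-x)|/|x|$ bounded. A routine computation shows $|x(1-\tfrac{x}{i\lambda_2})(1-\tfrac{x}{i\lambda_{-2}})|\ge c(1+|x|)^3$ for $|x|\ge1$, while the two linear factors are bounded away from $0$ on $[-1,1]$; combining this with~\eqref{Q2} yields $|P(x)|\le C(1+|x|)^{-3}e^{\sqrt2\pi\sqrt{|x|}}$ for $|x|\ge1$, and on $[-1,1]$ the same inequality follows from the continuity of $P$ and the positivity of the right-hand side.

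For the derivative estimates I would first check, using~\eqref{A16}--\eqref{A17}, that for $k\in\Z\setminus\{0,\pm2\}$ the point $i\lambda_k^\pm$ is neither $0$, $i\lambda_2$, $i\lambda_{-2}$, nor a zero coming from the opposite superscript, so that it is a \emph{simple} zero of $P$; differentiating~\eqref{Q1} there gives
\[
P'(i\lambda_k^+)=\frac{-P_1'(-i\lambda_k^+)\,P_2(-i\lambda_k^+)}{i\lambda_k^+\,(1-\tfrac{i\lambda_k^+}{i\lambda_2})(1-\tfrac{i\lambda_k^+}{i\lambda_{-2}})},\qquad
P'(i\lambda_k^-)=\frac{-P_1(-i\lambda_k^-)\,P_2'(-i\lambda_k^-)}{i\lambda_k^-\,(1-\tfrac{i\lambda_k^-}{i\lambda_2})(1-\tfrac{i\lambda_k^-}{i\lambda_{-2}})}.
\]
For~\eqref{C14a} I would bound the denominator by $C|k|^3$ via~\eqref{A16}, use $|P_1'(-i\lambda_k^+)|\ge C_6$ from~\eqref{C7}, and estimate $|P_2(-i\lambda_k^+)|$ from below using~\eqref{C12}: since $-i\lambda_k^+=k+i+O(k^{-2})$, one checks that $e^{-i\pi/4}\sqrt{-i\lambda_k^+}$ lies at distance tending to $\infty$ from the $\mu_j$'s and that $|\mathrm{Im}(e^{-i\pi/4}\sqrt{-i\lambda_k^+})|=\tfrac{1}{\sqrt2}\sqrt{|k|}+O(|k|^{-1/2})$, whence $|P_2(-i\lambda_k^+)|\ge C^{-1}e^{\sqrt2\pi\sqrt{|k|}}$. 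For~\eqref{C14b} the denominator is $\le C|k|^6$ by~\eqref{A17}, and since $-i\lambda_k^-=k+i(k^2-1)+O(k^{-2})$ one has $|\mathrm{Im}(-i\lambda_k^-)|=k^2-1+O(k^{-2})$, so~\eqref{C6} gives $|P_1(-i\lambda_k^-)|\ge C^{-1}e^{\pi k^2}$.

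The delicate point is the lower bound on $|P_2'(-i\lambda_k^-)|$, i.e. on $P_2'$ at one of its own zeros. Here I would combine~\eqref{C5a} and~\eqref{C5b} into $P_2(w)=-iP_4(e^{-i\pi/4}\sqrt w)\,P_4(-e^{-i\pi/4}\sqrt w)$ and observe, carefully tracking the principal branch of the square root and using $-i\lambda_k^-=i\mu_k^2$, that $e^{-i\pi/4}\sqrt{-i\lambda_k^-}=\sgn(k)\mu_k$; thus at $w=-i\lambda_k^-$ exactly one of the two factors $P_4(\pm e^{-i\pi/4}\sqrt w)$ vanishes (which one depending on $\sgn(k)$), and the product rule yields $|P_2'(-i\lambda_k^-)|=\tfrac{1}{2|\mu_k|}\,|P_4'(\mu_k)|\,|P_4(-\mu_k)|$, which by~\eqref{C9} and~\eqref{C8} — the latter applies since $-\mu_k$ stays at distance $>\varepsilon$ from the zeros of $P_4$, its imaginary part being $\approx1/2$ — is $\ge C^{-1}|k|^{-1}$. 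Assembling all these bounds gives~\eqref{C14a}--\eqref{C14b} for $|k|$ large, and the finitely many remaining indices are harmless since $P'(i\lambda_k^\pm)\ne0$ there, so $C$ can simply be enlarged.
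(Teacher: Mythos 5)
Your argument is correct and follows essentially the same route as the paper: read off everything from the factorization \eqref{Q1}, use the sine-type estimates \eqref{C6}--\eqref{C9} for $P_1$ and $P_4$, the growth bound \eqref{Q2} for $P_2$ on $\R$, the substitution estimate \eqref{C12} to lower-bound $|P_2(-i\lambda_k^+)|$, and the factorization $P_2(w)=-iP_4(e^{-i\pi/4}\sqrt{w})P_4(-e^{-i\pi/4}\sqrt{w})$ with the identity $e^{-i\pi/4}\sqrt{-i\lambda_k^-}=\sgn(k)\mu_k$ to compute $P_2'$ at its zeros. The only cosmetic difference is how the uniform bound $|P_1(x)|\le C$ on $\R$ is obtained: the paper multiplies $P_1$ by $1+is/z$ (with $s\gg1$) so that the modified sine-type function has all its zeros bounded away from $\R$ and then discards the factor $|1+is/x|\ge1$, whereas you split $\R$ into $\{|x|\ge1\}$, where \eqref{C6} applies directly since $\mathrm{Im}(-i\lambda_k^+)\ge\tfrac12$ for $k\ne0$, and $\{|x|<1\}$, where you bound $P_1(z)/z$ by continuity; both yield the same conclusion.
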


\begin{proof}
Note first that $\text{dist} (\R , \{-i \lambda _k^+; \ k\ne 0   \} )>0$ from \eqref{ZZ1}.
Since $(1+\frac{is}{z})P_1(z)$ is
also an entire function of sine type for $s\gg 1$, with $\text{dist} (\R , \{ -i\lambda _k^+;\ k\ne 0\} \cup \{ is \})>0$,
we infer from Proposition  \ref{propsine}  that for some constant $C>0$
\[
|P_1(x)| \le C \qquad \forall x\in \R .
\]
Combined to \eqref{Q1} and \eqref{Q2}, this yields \eqref{C13}. Let us turn to \eqref{C14a}. Note first that for $k\in \Z\setminus \{ 0,\pm 2 \}$
\be
P'(i\lambda _k^+) =  P_1'(-i\lambda _k^+) \frac{ P_2(-i\lambda _k^+)}{ (-i\lambda _k^+) (1-\frac{\lambda_k^+}{\lambda _2} ) (1-\frac{\lambda_k^+}{\lambda_ {-2} }) }
\cdot \label{C15}
\ee
Clearly, for some $\delta  >0$, $|\lambda _k^+-\lambda _l^-|>\delta   $ for all $k\in \Z \setminus \{ 0, \pm 2\}$, $l\in \Z$, and
\[
|\text{Im }(e^{-i\frac{\pi}{4}} \sqrt{-i\lambda _k^+})| =
|\text{Im } \big( \frac{1-i}{\sqrt{2} } \sqrt{k+i+O( k^{-2} ) }\big) | = \sqrt{ \frac{|k|}{2}}  + O(|k|^{-\frac{1}{2}}).
\]
With \eqref{C12}, this gives
\be
|P_2(-i\lambda _k^+)| \ge Ce^{\sqrt{2} \pi \sqrt{|k|} }. \label{C17}
\ee
It follows then from \eqref{C7}, \eqref{C15}, and \eqref{C17} that
\[
|P'( i\lambda _k^+)| \ge C\frac{e^{ \sqrt{2}\pi \sqrt{|k|}}}{|k|^3}
\]
for some constant $C>0$ independent of $k\in \Z \setminus \{ 0 , \pm  2\}$.
On the other hand
\be
P'( i\lambda_k^-) =  P_2'(-i\lambda _k^-)
\frac{ P_1(-i\lambda _k^-)}{ (-i\lambda _k^-) (1-\frac{\lambda_k^-}{\lambda _2} ) (1-\frac{\lambda_k^-}{\lambda_ {-2} }) }
\cdot \label{C18}
\ee
By \eqref{A17} and \eqref{C6}, we have that
\[
|P_1(-i\lambda _k^-)| \ge Ce^{\pi k^2 }, \qquad k\in \Z \setminus \{ 0, \pm 2\} \cdot
\]
From \eqref{C5a}-\eqref{C5b}, we have that
\[
P_2'(z) =\frac{e^{ i \frac{\pi}{4}} } {2\sqrt{z}}
\left[  P_4' (e^{-i\frac{\pi}{4}} \sqrt{z})P_4(-e^{-i\frac{\pi}{4}} \sqrt{z})
-  P_4 (e^{-i\frac{\pi}{4}} \sqrt{z})P_4'(-e^{-i\frac{\pi}{4}} \sqrt{z})
\right] .
\]
For $z=-i\lambda _k^-$, $e^{-i\frac{\pi}{4}}\sqrt{z}=\sqrt{-\lambda _k^-} = \text{sgn}\,  (k) \mu_k$, hence
\[
P_2'(-i\lambda_k^- ) = \frac{1}{2\mu _k} P_4'(\mu _k) P_4 (- \mu _k) .
\]
Since $|\mu _k +\mu _l | > \delta >0$  for $k\in \Zs$ and $l\in \Z$, we have from \eqref{C8} that $|P_4(-\mu _k )|\ge c$ while, by \eqref{C9},
$|P_4'(\mu _k)|>c>0$. It follows that for some constant $C>0$
\[
|P_2'(-i\lambda _k^-)| \ge \frac{C}{|k|}\qquad \forall k\in \Zs.
\]
Therefore,
\[
|P'(i\lambda _k^-)| \ge C\frac{e^{\pi k^2}}{|k|^7},\qquad k\in \Zs.
\]
\end{proof}
We seek for an entire function $m$ (the so-called {\em multiplier})  such that
\begin{eqnarray*}
|m(x)| &\le& C(1+|x|)e^{-\sqrt{2} \pi \sqrt{|x|}}, \qquad x\in \R ,\\
|m(i\lambda _k^+)| &\ge & C^{-1} |k|^{-3} e^{-\sqrt{2} \pi \sqrt{|k|}},\qquad k\in \Z \setminus \{ 0\} , \\
|m(i \lambda _k^- )|  &\ge&  C^{-1} e^{a\pi k^2 -2\sqrt{2} \pi \sqrt{ |k |} }, \qquad k\in \Z \setminus \{ 0\} .
\end{eqnarray*}
We shall use the same multiplier as in \cite{Glass}, providing additional estimates required to evaluate it at the points
$i\lambda _k^-$ for $k\in \Z$.
Let
\be
s (t)=at - b\sqrt{t},\qquad t>0 \label{D1}
\ee
where the constants $a>0$ and $b>0$ will be chosen later.
Note that $s$ is increasing for $t>\left(\frac{b}{2a} \right) ^{2}$ and that $s(B)=0$ where $B=(b/a)^2$. Let
\be
\nu (t) = \left\{
\begin{array}{ll}
0 \quad &t\le B,\\
s(t) & t\ge B.
\end{array}
\right.
\label{D2}
\ee
Introduce first
\ba
g(z) &=& \int_0^\infty \log (1-\frac{z^2}{t^2} ) d\nu (t) = \int_B^\infty \log (1-\frac{z^2}{t^2}) ds (t) \qquad z\in \C \setminus \R , \label{D3}\\
U(z) &=& \int_0^\infty \log |1-\frac{z^2}{t^2}| d\nu (t) = \int_B^\infty \log |1-\frac{z^2}{t^2}|  ds(t) \qquad z\in \C .\label{D4}
\ea
Note that $g$ is holomorphic on $\C \setminus \R$ and $U$ is continuous on $\C$, with $U(z)=\text{Re } g(z)$. Next we atomize
the measure $\mu$ in the above integrals, setting
\ba
\tilde g(z) &=& \int_0^\infty \log (1-\frac{z^2}{t^2} ) d [\nu (t)]\qquad z \in \C \setminus \R  , \label {D5} \\
\tilde U (z) &=& \int_0^\infty \log \vert 1-\frac{z^2}{t^2} \vert  d[\nu (t) ] \qquad z\in \C ,\label{D6}
\ea
where $[x]$ denotes the integral part of $x$.
Again, $\tilde g$ is holomorphic on $\C \setminus \R$ and $\tilde U$ is continuous on $\C$ with $\tilde U(z)=\text{Re } g(z)$. Actually,
$\exp \tilde g$ is an entire function. Indeed, if $\{ \tau _k\} _{k\ge 0} $ denotes the sequence of discontinuity points for $t\mapsto [\nu (t)]$,
then $\tau _k\sim  k/a$ as $k\to \infty$ and
\be
\tilde g(z) =\sum_{k\ge 0} \log (1-\frac{z^2}{\tau _k^2}), \quad z\in \C \setminus \R \cdot
\label{D7}
\ee
Therefore,
\be
e^{\tilde g(z) }  = \prod_{k\ge 0} (1-\frac{z^2}{\tau _k^2}),
\label{D8}
\ee
the product being uniformly convergent on any compact set in $\C$.
We shall pick later $m(z) =\exp (\tilde g ( z- i ))$ with $a=\frac{T}{2\pi} - 1$ and $b=\sqrt{2}$.
The strategy, which goes back to \cite{BM}, consists in estimating carefully $U$, and next $U-\tilde U$.
Let for $x>0$
\be
w (x)=-\pi \sqrt{x} + x\log \left\vert \frac{x+1}{x-1} \right\vert -
\sqrt{x} \log \left\vert  \frac{\sqrt{x}+1}{\sqrt {x} -1}  \right\vert
+2\sqrt{x} \arctan ( \sqrt{x} )  .
\label{D9}
\ee
Note that $w\in L^\infty (\R ^+)$, for $\lim_{x\to \infty} w (x) =-2$ and  $w(0^+)=0$.
\begin{lemma}\label{lemglass}
\cite{Glass} It holds
\be
\label{D10}
U(x) +b\pi \sqrt{|x|} = -a Bw(|x|)\qquad \forall x\in \R
\ee
\end{lemma}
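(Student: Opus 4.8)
Since both sides of \eqref{D10} depend only on $|x|$, it suffices to establish the identity for $x>0$; the case $x=0$ is immediate, as $U(0)=\int_B^\infty\log 1\,ds(t)=0$ and $w(0^+)=0$. For $x>0$ my plan is to remove the measure $ds(t)=\big(a-\tfrac{b}{2\sqrt t}\big)\,dt$ by an integration by parts and to reduce $U$ to two elementary integrals. Writing $\log|1-x^2/t^2|=\log|t^2-x^2|-2\log t$ one has $\frac{d}{dt}\log|1-x^2/t^2|=\frac{2x^2}{t(t^2-x^2)}$, so that, integrating by parts in $\int_B^\infty\log|1-x^2/t^2|\,ds(t)$,
\[
U(x)=\Big[s(t)\log|1-x^2/t^2|\Big]_B^\infty-2x^2\int_B^\infty\frac{s(t)}{t(t^2-x^2)}\,dt=-2x^2\int_B^\infty\frac{a-b\,t^{-1/2}}{t^2-x^2}\,dt .
\]
The boundary term at $t=B$ vanishes because $s(B)=0$, and the one at $t=+\infty$ vanishes since $s(t)\sim at$ while $\log|1-x^2/t^2|=O(t^{-2})$. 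When $x>B$ the last integrand has a non-integrable pole at $t=x$; I would then perform the integration by parts on $[B,x-\varepsilon]\cup[x+\varepsilon,+\infty)$ and note that the two extra boundary contributions, each behaving like $\pm s(x)\log\tfrac{2\varepsilon}{x}$ with opposite signs, cancel as $\varepsilon\to0^+$, so that the integral is to be read as a principal value.

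Next I would evaluate the two integrals explicitly. The rational one gives
\[
\int_B^\infty\frac{dt}{t^2-x^2}=\frac1{2x}\log\Big|\frac{B+x}{B-x}\Big| ,
\]
a principal value if $x>B$. For the one carrying $t^{-1/2}$, the substitution $t=r^2$ turns it into $2\int_{\sqrt B}^\infty\frac{dr}{r^4-x^2}$; the partial fraction $\frac1{r^4-x^2}=\frac1{2x}\big(\frac1{r^2-x}-\frac1{r^2+x}\big)$ then produces a logarithmic term from $\int\frac{dr}{r^2-x}$ and, from $\int\frac{dr}{r^2+x}=\frac1{\sqrt x}\arctan\frac r{\sqrt x}$, both an $\arctan$ term and the term $-b\pi\sqrt x$ coming from $\arctan(+\infty)=\pi/2$. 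Substituting these closed forms into the expression for $U$ and simplifying the powers of $x$ (using $x^2/x=x$ and $x^2/x^{3/2}=\sqrt x$), I obtain $U(x)$ as the sum of a term proportional to $x\log|\cdots|$, a term proportional to $\sqrt x\log|\cdots|$, a term proportional to $\sqrt x\arctan(\cdots)$, and the term $-b\pi\sqrt x$.

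The last step is purely algebraic. Inserting the relation $B=(b/a)^2$, i.e.\ $b=a\sqrt B$, makes every coefficient a multiple of $aB$; rewriting $\arctan\frac{\sqrt B}{\sqrt x}=\frac\pi2-\arctan\frac{\sqrt x}{\sqrt B}$ splits the arctangent term; and after transferring $b\pi\sqrt x$ to the left-hand side the four remaining contributions recombine, term by term, into the four terms of $-aB\,w$ as defined in \eqref{D9}, i.e.\ $U(x)+b\pi\sqrt{|x|}=-aB\,w(|x|)$. The difficulty here is not conceptual but a matter of bookkeeping: keeping careful track of all the signs, and in particular handling cleanly the principal-value/excision argument when $|x|>B$ (and checking that the closed-form antiderivatives used remain valid across $t=x$). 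This is precisely the computation of \cite{Glass}, to which one may refer for the remaining details.
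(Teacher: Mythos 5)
The paper does not actually prove this lemma — it cites \cite{Glass} — so there is no in-house argument to compare against. Your proposal supplies one, and its architecture (integrate $U$ by parts against the density $ds(t)=\bigl(a-\tfrac{b}{2\sqrt t}\bigr)dt$, kill the boundary terms via $s(B)=0$ and the $O(t^{-1})$ decay at infinity, interpret the resulting pole for $|x|>B$ as a principal value, and evaluate the two elementary integrals by partial fractions) is the natural one and is essentially what Glass does. The principal-value observation is sound: the excised boundary contributions at $t=x\mp\varepsilon$ combine to $[s(x-\varepsilon)-s(x+\varepsilon)]\log(2\varepsilon/x)+o(1)=O(\varepsilon\log\varepsilon)\to 0$.

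There is, however, a bookkeeping slip that you would catch if you pushed the computation to the end rather than deferring to \cite{Glass}. After integration by parts one gets, for $x>0$,
\[
U(x)=-2ax^2\int_B^\infty\frac{dt}{t^2-x^2}+2bx^2\int_B^\infty\frac{t^{-1/2}\,dt}{t^2-x^2},
\]
and evaluating the second integral via $t=r^2$ and partial fractions exactly as you indicate, then using $b=a\sqrt B$ and $\arctan(\sqrt B/\sqrt x)=\tfrac{\pi}{2}-\arctan(\sqrt x/\sqrt B)$, produces
\[
U(x)+b\pi\sqrt{x}=b\pi\sqrt{x}-ax\log\Bigl|\frac{x+B}{x-B}\Bigr|+b\sqrt{x}\log\Bigl|\frac{\sqrt{x}+\sqrt{B}}{\sqrt{x}-\sqrt{B}}\Bigr|-2b\sqrt{x}\arctan\frac{\sqrt{x}}{\sqrt{B}},
\]
which is $-aB\,w(x/B)$, \emph{not} $-aB\,w(x)$. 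The two agree only in the normalization $B=(b/a)^2=1$; a small-$x$ check confirms the mismatch otherwise ($U(x)+b\pi\sqrt x = b\pi\sqrt x-\tfrac{2a}{3B}x^2+O(x^{5/2})$, while $-aBw(x)=aB\pi\sqrt x-\tfrac{2aB}{3}x^2+O(x^{5/2})$). So \eqref{D10} as printed is imprecise and should read $U(x)+b\pi\sqrt{|x|}=-aB\,w(|x|/B)$. This is harmless downstream, since the proof of Lemma~\ref{lem3} only uses $w\in L^\infty(\R^+)$ and $\|w(\cdot/B)\|_{L^\infty(\R^+)}=\|w\|_{L^\infty(\R^+)}$, so the constant in \eqref{Z10} is unchanged; but because your proof explicitly claims the four terms recombine ``term by term'' into \eqref{D9}, you should redo that final algebra carefully and record the corrected argument of $w$.
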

Our first aim is to extend that estimate to the whole domain $\C$.
\begin{lemma}
\label{lem3}
There exists some positive constant $C=C(a,b)$ such that
\begin{equation}
 -C-b\pi (1+\frac{1}{\sqrt{2}} ) \sqrt{ |y| }  \le U(z)  + b\pi \sqrt{|x|} -a \pi  |y| \le C,\qquad z=x+iy\in \C.  \label{D16}
\end{equation}
\end{lemma}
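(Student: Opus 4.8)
Here is the plan I would follow.

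\medskip

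The idea is to obtain a closed form for $g$ on the open upper half‑plane, read off its behaviour as $|z|\to\infty$ while keeping careful track of the branches of $\log$ and $\sqrt{\cdot}$, and reduce \eqref{D16} to an elementary trigonometric inequality; the bounded part of $\C$ and the line $\text{Im}\,z=0$ are then dealt with by the continuity of $U$. Since $g(\bar z)=\overline{g(z)}$ we have $U(\bar z)=U(z)$, while $|x|$ and $|y|$ are unchanged under conjugation, so it suffices to prove the estimate for $\text{Im}\,z\ge 0$.

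First I would integrate by parts in $g(z)=\int_B^\infty\log(1-z^2/t^2)\,d\nu(t)$. The boundary terms vanish (because $\nu(B)=s(B)=0$, as $\sqrt B=b/a$, and $\nu(t)\log(1-z^2/t^2)\to0$ at infinity), so $g(z)=-2z^2\int_B^\infty\frac{a-bt^{-1/2}}{t^2-z^2}\,dt$. Both resulting integrals are elementary: $\int_B^\infty\frac{dt}{t^2-z^2}=\frac1{2z}\log\frac{B+z}{B-z}$ for $\text{Im}\,z>0$ (the curve $t\mapsto\frac{t-z}{t+z}$ stays in the lower half‑plane, so the principal branch applies), and, after the substitution $t=u^2$, $\int_B^\infty\frac{t^{-1/2}}{t^2-z^2}\,dt=\frac1z\big(\frac1{2\sqrt z}\log\frac{\sqrt B+\sqrt z}{\sqrt B-\sqrt z}-\frac1{2\sqrt{-z}}\log\frac{\sqrt B+\sqrt{-z}}{\sqrt B-\sqrt{-z}}\big)$. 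Using $z/\sqrt{-z}=-\sqrt{-z}$ this gives, for $\text{Im}\,z>0$,
\[
g(z)=-az\log\frac{B+z}{B-z}+b\sqrt z\,\log\frac{\sqrt B+\sqrt z}{\sqrt B-\sqrt z}+b\sqrt{-z}\,\log\frac{\sqrt B+\sqrt{-z}}{\sqrt B-\sqrt{-z}}.
\]

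Next I would let $|z|\to\infty$. Fix $R_0>B$. For $|z|\ge R_0$ and $\text{Im}\,z>0$ one checks, using that $\text{Im}\frac{B+z}{B-z}$ and $\text{Im}\frac{\sqrt B+\sqrt z}{\sqrt B-\sqrt z}$ have the sign of $\text{Im}\sqrt z>0$ while $\text{Im}\frac{\sqrt B+\sqrt{-z}}{\sqrt B-\sqrt{-z}}$ has the sign of $\text{Im}\sqrt{-z}<0$, that the first two quotients tend to $-1$ from above and the third from below; hence $\log\frac{B+z}{B-z}=i\pi+O(|z|^{-1})$, $\log\frac{\sqrt B+\sqrt z}{\sqrt B-\sqrt z}=i\pi+O(|z|^{-1/2})$, $\log\frac{\sqrt B+\sqrt{-z}}{\sqrt B-\sqrt{-z}}=-i\pi+O(|z|^{-1/2})$, uniformly. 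Since $\sqrt{-z}=-i\sqrt z$ for $\text{Im}\,z>0$, this yields
\[
g(z)=-ia\pi z+ib\pi(\sqrt z-\sqrt{-z})+O(1)=-ia\pi z-b\pi(1-i)\sqrt z+O(1).
\]
Writing $z=re^{i\theta}$ with $\theta\in[0,\pi]$ and taking real parts, $\text{Re}(-ia\pi z)=a\pi r\sin\theta=a\pi|y|$ and $\text{Re}\big(-b\pi(1-i)\sqrt z\big)=-b\pi\sqrt2\,\sqrt r\cos(\tfrac\theta2-\tfrac\pi4)=-b\pi\sqrt r\,\sqrt{1+\sin\theta}$ (using $2\cos^2(\tfrac\theta2-\tfrac\pi4)=1+\sin\theta$), whereas $\sqrt{|x|}=\sqrt r\sqrt{|\cos\theta|}$. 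Consequently, for $|z|\ge R_0$ and $\text{Im}\,z>0$ (and then by continuity for $\text{Im}\,z\ge0$),
\[
U(z)+b\pi\sqrt{|x|}-a\pi|y|=b\pi\sqrt r\big(\sqrt{|\cos\theta|}-\sqrt{1+\sin\theta}\big)+O(1).
\]

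Finally I would conclude. The upper bound is immediate because $|\cos\theta|\le1\le1+\sin\theta$ on $[0,\pi]$, so the bracket is $\le0$. For the lower bound, as $\sqrt{|y|}=\sqrt r\sqrt{\sin\theta}$, it is enough to prove the elementary inequality $\sqrt{|\cos\theta|}+(1+\tfrac1{\sqrt2})\sqrt{\sin\theta}-\sqrt{1+\sin\theta}\ge0$ on $[0,\pi]$; with $s=\sin\theta\in[0,1]$ and $|\cos\theta|=(1-s^2)^{1/2}$ this reads $(1-s^2)^{1/4}+(1+\tfrac1{\sqrt2})s^{1/2}\ge(1+s)^{1/2}$, and after dividing by $(1+s)^{1/4}$ and bounding $(1+s)^{1/4}-(1-s)^{1/4}=\frac{2s}{\big((1+s)^{1/2}+(1-s)^{1/2}\big)\big((1+s)^{1/4}+(1-s)^{1/4}\big)}\le 2^{1/4}s$ it becomes $1+\tfrac1{\sqrt2}\ge 2^{1/4}s^{1/2}(1+s)^{1/4}$, which holds since the right side is increasing and equals $\sqrt2<1+\tfrac1{\sqrt2}$ at $s=1$. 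On the compact set $\{|z|\le R_0,\ \text{Im}\,z\ge0\}$ the function $U(z)+b\pi\sqrt{|x|}-a\pi|y|$ is continuous, hence bounded, which is stronger than the claimed two‑sided bound there; combining with the symmetry $z\mapsto\bar z$ gives \eqref{D16}. The main obstacle is the branch bookkeeping in the third step: one must justify the exact principal values of the three logarithms and their asymptotics uniformly in $\arg z\in[0,\pi]$, including the rays $\arg z=0,\pi$ where $\sqrt z$ touches the cut; the rest is routine computation or an elementary inequality.
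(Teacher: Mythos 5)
Your proof is correct, and it takes a genuinely different route from the paper's. The paper follows \cite{Glass}: it invokes the harmonic representation $U(z)=|\text{Im}\,z|\big(\pi a+\frac1\pi\int_{-\infty}^\infty U(t)\,|z-t|^{-2}dt\big)$, plugs in the known trace $U(t)=-b\pi\sqrt{|t|}-aBw(|t|)$ from Lemma~\ref{lemglass}, and evaluates the Poisson integral of $\sqrt{|t|}$ in closed form, obtaining $-\frac{b\pi}{\sqrt2}\big(\sqrt{\sqrt{x^2+y^2}-x}+\sqrt{\sqrt{x^2+y^2}+x}\big)$; the two-sided bound then follows from the elementary inequality $\sqrt{2|x|}\le\sqrt{\sqrt{x^2+y^2}-x}+\sqrt{\sqrt{x^2+y^2}+x}\le\sqrt{2|x|}+(\sqrt2+1)\sqrt{|y|}$. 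You instead derive a closed form for $g$ directly by integrating by parts in \eqref{D3}, extract the leading-order term $-ia\pi z-b\pi(1-i)\sqrt z$ via branch analysis in the upper half-plane, and reduce the lower bound to the trigonometric inequality $\sqrt{|\cos\theta|}+(1+\tfrac1{\sqrt2})\sqrt{\sin\theta}\ge\sqrt{1+\sin\theta}$ on $[0,\pi]$, which you verify by an algebraic estimate in $s=\sin\theta$. Both arguments are complete; yours is more self-contained (it does not appeal to the Poisson formula or to Lemma~\ref{lemglass}, only to the definitions \eqref{D1}--\eqref{D4}), at the cost of the careful branch bookkeeping you flag, whereas the paper's argument piggybacks on previously established identities and a single explicit Poisson integral. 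One remark: the identity $-\tfrac{b\pi}{\sqrt2}\big(\sqrt{\sqrt{x^2+y^2}-x}+\sqrt{\sqrt{x^2+y^2}+x}\big)=-b\pi\sqrt r\sqrt{1+\sin\theta}$ (with $z=re^{i\theta}$) shows that the two closed-form expressions for the $\sqrt{|t|}$-contribution are in fact the same, so your elementary inequality and the paper's are equivalent after a change of variables; the two proofs diverge only in how that expression for $U$ is obtained.
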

\begin{proof}
We follow the same approach as in \cite{Glass}. We first use the following identity from \cite[(36)]{Glass} (note that $U$ is even)
\be
U(z)= |\text{Im } z | (\pi a + \frac{1}{\pi} \int_{-\infty}^\infty \frac{U(t) }{|z-t|^2}dt).
\label{D18}
\ee
To derive \eqref{D16}, it remains to estimate the integral term in \eqref{D18} for $z=x+iy \in \C$.
We may assume without loss of generality that $y>0$. From Lemma \ref{lemglass}, we can write
\[
U(t) = -b\pi \sqrt{ |t| } -aB w( |t| )
\]
where $w\in L^\infty (\R ^+)$. Then, with $t=ys$,
\begin{eqnarray}
\left\vert \frac{y}{\pi} \int_{-\infty}^\infty \frac{aB w( |t| ) }{(x-t)^2 + y^2}\,  dt \right\vert
&\le&  ||w||_{L^\infty ( \R ^+)} \frac{aBy}{\pi} \int_{-\infty}^\infty \frac{ds}{ y ( (\frac{x}{y}-s)^2 +1)} \nonumber \\
&=& aB ||w||_{L^\infty (\R ^+ ) }=:C. \label{Z10}
\end{eqnarray}
On the other hand, still with $t=y s $, and using explicit computations in \cite{Glass} of some integral terms,
\begin{eqnarray*}
\frac{y}{\pi} \int_{-\infty}^\infty (-b\pi) \frac{\sqrt{|t|}}{(x-t)^2 +y^2} dt
&=& -b\sqrt{y} \int_{-\infty}^\infty \frac{\sqrt{|s|}}{(\frac{x}{y}-s)^2 +1}ds  \\
&=& -b\sqrt{y} \left( \frac{\pi}{\sqrt{2\sqrt{ 1+\frac{x^2}{y^2}} -\frac{2x}{y}}}
+  \frac{\pi}{\sqrt{2\sqrt{ 1+\frac{x^2}{y^2}} +\frac{2x}{y}}}
\right)\\
&=& -b\frac{\pi}{\sqrt{2}} ( \sqrt{\sqrt{x^2+y^2 } -x } + \sqrt{\sqrt{x^2+y^2}+x} ).
\end{eqnarray*}
Routine computations give
\[
\sqrt{2|x|} \le \sqrt{\sqrt{x^2+y^2} -x } + \sqrt{\sqrt{x^2+y^2}+x} \le \sqrt{2|x|} +(\sqrt{2} +1)\sqrt{y}\quad
\forall x\in \R,\ \forall y>0.
\]
Therefore
\[
-b\pi \sqrt{ |x| }  - b \pi (1+ \frac{1}{\sqrt{2}} ) \sqrt{y} \le \frac{y}{\pi}
\int_{-\infty}^\infty (-b\pi) \frac{ \sqrt{|t|}}{(x-t)^2+y^2} dt
\le -b\pi \sqrt{|x|}.
\]
Combined to \eqref{D18} and \eqref{Z10}, this yields \eqref{D16}.
\end{proof}
In order to obtain estimates for $\tilde{U} (z)$, we need to give bounds from above and below for
\[
\tilde{U} (z) - U(z) =\int_0^\infty \log |1-\frac{z^2}{t^2} |\,  d( [\nu ](t) - \nu (t) ).
\]
We need the following lemma, which is inspired from \cite[Vol. 2, Lemma p. 162]{Koosis}
\begin{lemma}
\label{lem4}
Let $\nu:\R ^+\to \R ^+$ be nondecreasing and null on $(0,B)$. Then for $z=x+iy$ with $y\ne 0$, we have
\be
-\log ^+ \frac{|x|}{|y|} -\log ^+ \frac{x^2+y^2}{B^2} -\log 2 \le I =
\int_0^\infty \log |1-\frac{z^2}{t^2} | d ([\nu ] (t) -\nu (t)) \le \log ^+ \frac{|x|}{|y|} \cdot
\label{D26}
\ee
\end{lemma}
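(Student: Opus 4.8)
The plan is to turn the Stieltjes integral against $d([\nu]-\nu)$ into an ordinary integral against $\phi'(t)\,dt$, where $\phi(t):=\log|1-z^2/t^2|$, and then to exploit that $\nu-[\nu]$ always lies in $[0,1)$ together with a precise description of the sign and of the unique critical point of $\phi'$ on $(0,\infty)$.

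First I would collect the elementary properties of $\phi$. Since $z=x+iy$ with $y\neq0$, we have $1-z^2/t^2\neq0$ for every $t>0$, so $\phi\in C^\infty(0,\infty)$ and $\phi(t)\to0$ as $t\to\infty$. A direct computation gives
\[
\phi'(t)=\frac{2}{t}\,\mathrm{Re}\,\frac{z^2}{t^2-z^2}=\frac{2}{t}\cdot\frac{t^2(x^2-y^2)-(x^2+y^2)^2}{|t^2-z^2|^2},
\]
so $\phi'(t)$ has the sign of $t^2(x^2-y^2)-(x^2+y^2)^2$. Hence, if $|x|\le|y|$ then $\phi'<0$ on all of $(0,\infty)$; while if $|x|>|y|$ then $\phi$ is strictly decreasing on $(0,t_*]$ and strictly increasing on $[t_*,\infty)$, where $t_*^2:=(x^2+y^2)^2/(x^2-y^2)$, so that $\phi(t_*)$ is the \emph{global} minimum of $\phi$ on $(0,\infty)$; a short substitution yields the explicit value $\phi(t_*)=\log\bigl(2|xy|/(x^2+y^2)\bigr)\le0$.

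Next I would integrate by parts. Because $\nu$ is nondecreasing, $[\nu]$ (the integral part of $\nu$) is a nondecreasing step function, so $[\nu]-\nu$ has locally bounded variation; since $\phi$ is $C^1$, Stieltjes integration by parts applies, and the boundary terms vanish — at the lower end because $[\nu]-\nu$ vanishes near $0$ (as $\nu\equiv0$ there), and at $+\infty$ because $\phi(t)\to0$ while $|[\nu]-\nu|<1$. This gives
\[
I=\int_0^\infty\bigl(\nu(t)-[\nu](t)\bigr)\phi'(t)\,dt=\int_B^\infty\bigl(\nu(t)-[\nu](t)\bigr)\phi'(t)\,dt,
\]
the last equality since $\nu-[\nu]\equiv0$ on $(0,B)$. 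As $0\le\nu(t)-[\nu](t)<1$ for every $t$, we obtain at once
\[
\int_{\{t\ge B:\,\phi'(t)<0\}}\phi'(t)\,dt\ \le\ I\ \le\ \int_{\{t\ge B:\,\phi'(t)>0\}}\phi'(t)\,dt .
\]

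It then remains to evaluate the two truncated integrals via the monotonicity picture and the fundamental theorem of calculus, and to pass to the stated bounds through two elementary inequalities: (a) $|1-z^2/B^2|\le1+(x^2+y^2)/B^2\le2\max\{1,(x^2+y^2)/B^2\}$, so that $\log|1-z^2/B^2|\le\log2+\log^+\frac{x^2+y^2}{B^2}$ (the opposite sign being controlled the same way); and (b) for $|x|>|y|$, $\frac{x^2+y^2}{2|xy|}\le\frac{|x|}{|y|}$, i.e. $x^2+y^2\le2x^2$, which gives $-\phi(t_*)=\log\frac{x^2+y^2}{2|xy|}\le\log\frac{|x|}{|y|}=\log^+\frac{|x|}{|y|}$ and $\phi(t_*)\ge-\log^+\frac{|x|}{|y|}$. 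One then runs through the cases: if $|x|\le|y|$ the positive part is empty, so $I\le0=\log^+\frac{|x|}{|y|}$, and the negative part gives $I\ge\int_B^\infty\phi'=-\phi(B)\ge-\log2-\log^+\frac{x^2+y^2}{B^2}$; if $|x|>|y|$ and $B\ge t_*$, then $\phi'\ge0$ on $[B,\infty)$, whence $I\ge0$ (which beats the lower bound) and $I\le\int_B^\infty\phi'=-\phi(B)\le-\phi(t_*)\le\log^+\frac{|x|}{|y|}$ using that $t_*$ is the global minimizer; and if $|x|>|y|$ and $B<t_*$, then $I\le\int_{t_*}^\infty\phi'=-\phi(t_*)\le\log^+\frac{|x|}{|y|}$ and $I\ge\int_B^{t_*}\phi'=\phi(t_*)-\phi(B)\ge-\log^+\frac{|x|}{|y|}-\log2-\log^+\frac{x^2+y^2}{B^2}$. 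Collecting the cases yields exactly \eqref{D26}. The step needing genuine care is the analysis of $\phi'$ — deriving its closed form, locating the unique critical point $t_*$ and the value $\phi(t_*)$, and especially observing that $t_*$ is the global minimizer, which is what lets the case $B\ge t_*$ be dispatched via $\phi(B)\ge\phi(t_*)$; the integration by parts and the remaining bookkeeping are routine.
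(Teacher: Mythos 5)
Your proof is correct and follows essentially the same route as the paper's: integrate by parts against $\phi'(t)\,dt$, use $0\le\nu-[\nu]<1$, and exploit the fact that $\phi(t)=\log|1-z^2/t^2|$ is decreasing on $(0,\infty)$ when $|x|\le|y|$ and otherwise decreases to a unique global minimum at $t_*$ before increasing back to $0$. The paper phrases the monotonicity geometrically as the distance from $1$ to the ray $\{z^2/t^2:t>0\}$ rather than via your explicit formula for $\phi'$; incidentally, the paper's stated expression $t^*=|z|^2/|x|$ for the minimizer is a harmless slip, and your $t_*^2=(x^2+y^2)^2/(x^2-y^2)$ is the correct one — the argument only needs the minimum value $\phi(t_*)=\log\bigl(2|xy|/(x^2+y^2)\bigr)$ and the monotonicity picture, which both proofs use identically.
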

\begin{proof}
The proof of the upper bound is the same as in \cite{Koosis}. It is sketched here just for the sake of completeness.
Pick any $z=x+iy$ with $y\ne 0$. Integrate by part in $I$ to get
\[
I=\int_0^\infty (\nu (t) - [\nu (t)] ) \frac{\partial }{\partial t } \log |1-\frac{z^2}{t^2} | dt. \]
Let $\zeta=\frac{z^2}{t^2}$. If $\text{Re } z^2 \le 0$ (i.e. if $|x|\le |y|$), then the distance $|1-\zeta |$ is decreasing w.r.t. $t$
($t\in (0,+\infty)$),
so that $I\le 0$. If $\text{Re } z^2 >0$, then $|1-\zeta |$ decreases to the minimal value
$\frac{ |\text{Im } z^2|}{|z^2|}$ taken at $t=t^*:=\frac{|z|^2}{|x|}$, and then it increases. Since $0\le \nu (t) - [ \nu (t) ] \le 1$, we have that
\[
I \le \int_{t^*}^\infty \frac{\partial}{\partial t} \log |1-\frac{z^2}{t^2} | dt = \log \frac{|z^2|}{|\text{Im } z^2|}
=\log ( \frac{|x|}{2|y|} + \frac{|y|}{2|x|}) \le \log \frac{|x|}{|y|}   \cdot
\]
Let us pass to the lower bound. If $\text{Re } z^2\le 0$,
\[
I\ge \int_B^\infty \frac{\partial }{\partial t} \log |1-\frac{z^2}{t^2} | dt = -\log |1-\frac{z^2}{B^2} | \cdot
\]
Assume now that $\text{Re } z^2 >0$. If $t^*=\frac{|z^2|}{|x|} \le B$, $I\ge 0$. If $t^*>B$, then
\[
I\ge \int_B^{t^*}\frac{\partial}{\partial t} \log |1-\frac{z^2}{t^2}| dt =-\log \frac{|z^2|}{|\text{Im } z^2|}
-\log  |1-\frac{z^2}{B^2}|\cdot
\]
Note that
\[
\log | 1-\frac{z^2}{B^2}| \le \log (1+|\frac{z}{B}| ^2) \le \log ^+ \frac{x^2+y^2}{B^2} + \log 2.
\]
Therefore
\[
I\ge -\log ^+ \frac{|x|}{|y|} -\log ^+ \frac{x^2+y^2}{B^2} -\log 2.
\]
\end{proof}
Gathering together Lemma \ref{lem3} and Lemma \ref{lem4}, we obtain the
\begin{proposition}
\label{prop4}
There exists some positive constant $C=C(a,b)$ such that for any complex number $z=x+iy$ with $y\ne 0$,
\be
-C-b\pi (1+\frac{1}{\sqrt{2}} )\sqrt{|y|}  -\log^+ \frac{|x|}{|y|} -\log ^+ (\frac{x^2+y^2}{B^2}) -\log 2
\le \tilde U (z) +b\pi \sqrt{|x|} -a\pi |y| \le C + \log ^+ \frac{|x|}{|y|}. \label{D30}
\ee
\end{proposition}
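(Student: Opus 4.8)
The plan is to obtain \eqref{D30} by simply adding together the two-sided estimates already established in Lemma~\ref{lem3} and Lemma~\ref{lem4}, using the trivial splitting
\[
\tilde U(z) + b\pi\sqrt{|x|} - a\pi|y| = \bigl(U(z) + b\pi\sqrt{|x|} - a\pi|y|\bigr) + \bigl(\tilde U(z) - U(z)\bigr).
\]
So it suffices to control the two terms on the right separately, and each is handled by one of the preceding lemmas.

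First I would invoke Lemma~\ref{lem3}, which is valid for every $z=x+iy\in\C$ (in particular for $y\ne 0$) and bounds the first parenthesis between $-C-b\pi\bigl(1+\tfrac{1}{\sqrt 2}\bigr)\sqrt{|y|}$ and $C$, for some $C=C(a,b)>0$. Next I would apply Lemma~\ref{lem4} to the second parenthesis. This requires checking that the measure $d\nu$ defined in \eqref{D1}--\eqref{D2} satisfies the hypotheses of Lemma~\ref{lem4}, namely that $\nu:\R^+\to\R^+$ is nondecreasing and vanishes on $(0,B)$. This is immediate: $\nu\equiv 0$ on $(0,B)$ by definition; since $B=(b/a)^2>(b/2a)^2$, the function $s$ is increasing on $[B,+\infty)$; and $s(B)=aB-b\sqrt B=0$, so $\nu$ is continuous and nondecreasing on all of $\R^+$. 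Recalling that $\tilde U(z)-U(z)=\int_0^\infty\log|1-\frac{z^2}{t^2}|\,d([\nu](t)-\nu(t))$ is exactly the quantity $I$ appearing in Lemma~\ref{lem4}, that lemma gives, for $y\ne 0$, the bound of $\tilde U(z)-U(z)$ between $-\log^+\frac{|x|}{|y|}-\log^+\frac{x^2+y^2}{B^2}-\log 2$ and $\log^+\frac{|x|}{|y|}$.

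Adding these two chains of inequalities term by term produces precisely \eqref{D30}. I do not expect any genuine obstacle here: the only points demanding a little care are verifying the (mild) monotonicity and support hypotheses of Lemma~\ref{lem4} for this particular $\nu$, and observing that the $I$ of Lemma~\ref{lem4} literally equals $\tilde U-U$. The reason the final estimate comes out so clean is that Lemma~\ref{lem3} has already absorbed the leading normalizations $-a\pi|y|$ and $b\pi\sqrt{|x|}$, so that atomizing $\nu$ (i.e.\ passing from $U$ to $\tilde U$) only contributes the logarithmic remainder terms supplied by Lemma~\ref{lem4}.
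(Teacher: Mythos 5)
Your proof is correct and follows exactly the paper's approach: the paper itself dispenses with Proposition~\ref{prop4} in one line, "Gathering together Lemma~\ref{lem3} and Lemma~\ref{lem4}, we obtain the Proposition," which is precisely your decomposition $\tilde U = U + (\tilde U - U)$ with the two bounds added term by term. Your verification that $\nu$ satisfies the monotonicity and support hypotheses of Lemma~\ref{lem4} is a nice touch that the paper leaves implicit.
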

Pick now
\be
a=\frac{T}{2\pi} -1>0, \quad b=\sqrt{2}, \quad \text{ and } m(z) = \exp \tilde g (z-i)
\label{D31}
\ee
Note that $|m(z)|=\exp \tilde U (z-i)$.
The needed estimates for the multiplier $m$ are collected in the following
\begin{proposition}
\label{prop5}
$m$ is an entire function on $\C$ of exponential type at most $a\pi$. Furthermore, the following estimates hold for some constant $C>0$:
\ba
| m(x) |                              &\le  & C(1+|x|) e^{-\sqrt{2} \pi \sqrt {|x|} }, \qquad x\in \R \label{D32}\\
| m(i \lambda _k^+) |     &\ge & C^{-1}    |k|^{-3} e^{-\sqrt{2}\pi \sqrt{|k|}} ,\qquad k\in \Zs \label{D33} \\
| m(i \lambda _k^- )  |     &\ge & C^{-1}    e^{a\pi k^2 -2\sqrt{2} \pi |k|} , \qquad k\in \Zs . \label{D34}
\ea
\end{proposition}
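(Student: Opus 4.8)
The plan is to transfer the estimates already obtained for $\tilde U$ in Proposition \ref{prop4} to $m(z)=\exp\tilde g(z-i)$, using $|m(z)|=\exp\tilde U(z-i)$, and then to record the order of growth from the product representation \eqref{D8}. First I would establish the exponential type. From \eqref{D8} we have $m(z)=\exp\tilde g(z-i)=\prod_{k\ge0}\bigl(1-\tfrac{(z-i)^2}{\tau_k^2}\bigr)$ with $\tau_k\sim k/a$, so $\exp\tilde g$ is an even entire function whose zero-counting function grows like $a r$; by the classical relation between the density of zeros and the exponential type (or directly: $\tilde U(x)+b\pi\sqrt{|x|}=-aBw(|x|)$ is bounded on $\R$ while the upper bound in Proposition \ref{prop4} gives $\tilde U(z)\le a\pi|{\rm Im}\,z|+$ lower-order terms), $m$ has exponential type at most $a\pi$. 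The shift by $i$ only changes lower-order terms, so the type is unaffected.

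Next, \eqref{D32}: for real $x$, $|m(x)|=\exp\tilde U(x-i)$, and I apply Proposition \ref{prop4} with $z=x-i$, i.e.\ ${\rm Im}\,z=-1$. Then $a\pi|y|$, $\log^+\frac{|x|}{|y|}=\log^+|x|$, and $\log^+\frac{x^2+1}{B^2}$ are all $O(\log(1+|x|))$ or $O(1)$, while $b\pi\sqrt{|x-1|}=\sqrt2\,\pi\sqrt{|x|}+O(1)$ since $b=\sqrt2$. So $\tilde U(x-i)\le -\sqrt2\,\pi\sqrt{|x|}+C+\log(1+|x|)$, which exponentiates to \eqref{D32}. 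For \eqref{D33}: since $\lambda_k^+=-1+ik+O(k^{-2})$ by \eqref{A16}, we have $i\lambda_k^+=-k+i+O(k^{-2})$, hence $i\lambda_k^+-i = -k + O(k^{-2})$, which is essentially real with ${\rm Im}$ part $O(k^{-2})$. Here the lower bound in Proposition \ref{prop4} must be used with care because $y={\rm Im}(i\lambda_k^+-i)$ is small, making $\log^+\frac{|x|}{|y|}$ potentially large; but $|y|\ge c|k|^{-2}$ and $|x|\sim|k|$, so $\log^+\frac{|x|}{|y|}\le 3\log|k|+O(1)$ and $\log^+\frac{x^2+y^2}{B^2}\le 2\log|k|+O(1)$. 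Combining with $b\pi\sqrt{|x|}=\sqrt2\,\pi\sqrt{|k|}+O(1)$ and $a\pi|y|=O(1)$ gives $\tilde U(i\lambda_k^+-i)\ge -\sqrt2\,\pi\sqrt{|k|}-C\log|k|$, i.e.\ $|m(i\lambda_k^+)|\ge C^{-1}|k|^{-3}e^{-\sqrt2\,\pi\sqrt{|k|}}$ (after adjusting the polynomial power; if the crude bound gives $|k|^{-C}$ one sharpens it by noting that near the real axis the finitely many nearby factors of the product are the only ones that can be small, each contributing at most a bounded power of $|k|$, exactly as in \cite{Glass}).

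For \eqref{D34}: by \eqref{A17}, $\lambda_k^-=-k^2+1+ik+O(k^{-2})$, so $i\lambda_k^--i = ik^2 + k + O(k^{-2})$, giving $x={\rm Re}=k+O(k^{-2})$ and $y={\rm Im}=k^2+O(1)$. Now I apply the lower bound in Proposition \ref{prop4}: $a\pi|y|=a\pi k^2+O(1)$ dominates; $b\pi\sqrt{|x|}=\sqrt2\,\pi\sqrt{|k|}=O(|k|)$; $b\pi(1+\tfrac1{\sqrt2})\sqrt{|y|}=\sqrt2(1+\tfrac1{\sqrt2})\pi|k|+O(1)=(\sqrt2+1)\pi|k|+O(1)$; and $\log^+\frac{|x|}{|y|}=0$ for $|k|$ large while $\log^+\frac{x^2+y^2}{B^2}=O(\log|k|)$. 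Assembling, $\tilde U(i\lambda_k^--i)\ge a\pi k^2 - (\sqrt2+1)\pi|k| - C\log|k| - C$. This is slightly weaker than the claimed $a\pi k^2 - 2\sqrt2\,\pi|k|$; since $\sqrt2+1<2\sqrt2$, the stated estimate \eqref{D34} follows a fortiori (absorbing $C\log|k|$ into the difference $(2\sqrt2-\sqrt2-1)\pi|k|$). The main obstacle is the bookkeeping in \eqref{D33}: one must check that the small imaginary part $O(k^{-2})$ of $i\lambda_k^+-i$ does not blow up the $\log^+\frac{|x|}{|y|}$ and $\log^+\frac{x^2+y^2}{B^2}$ terms beyond a bounded power of $|k|$, and, if one wants the precise exponent $-3$, that the atomization discrepancy $\tilde U-U$ near the real axis is controlled by the finitely many factors $1-\tfrac{z^2}{\tau_k^2}$ with $\tau_k$ close to $|x|$ — this is precisely the place where Lemma \ref{lem4} is sharp and must be invoked rather than the cruder Proposition \ref{prop4} bound. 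Finally, collecting these three estimates together with the exponential-type statement completes the proof. \qed
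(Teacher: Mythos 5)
Your overall approach — reading off all three estimates from Proposition~\ref{prop4} applied at $z = i\lambda_k^\pm - i$, and getting the exponential type from the growth of the zero-counting function of the atomized product (equivalently from the upper bound $\tilde U(z)\le a\pi|y|+\text{lower order}$) — is exactly the paper's. But the treatment of \eqref{D33} contains a computational error that you then spend effort working around unnecessarily. From \eqref{A16}, $\lambda_k^+=-1+ik+O(k^{-2})$, so
$i\lambda_k^+ = -i + i^2k + O(k^{-2}) = -k - i + O(k^{-2})$,
\emph{not} $-k+i+O(k^{-2})$. Hence $i\lambda_k^+ - i = -k - 2i + O(k^{-2})$: the imaginary part is $\approx -2$, not $O(k^{-2})$. (This is also visible from \eqref{ZZ1}: $\mathrm{Im}(i\lambda_k^+)=\mathrm{Re}\,\lambda_k^+\to -1$, and the paper records $\mathrm{Im}(i\lambda_k^\pm)\le -\tfrac12$ precisely for this purpose.)

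Because of the sign slip you are led to treat $|y|=O(k^{-2})$ and worry that $\log^+\frac{|x|}{|y|}$ blows up. Indeed with your $y$ one would get $\log^+\frac{|x|}{|y|}\approx 3\log|k|$ and $\log^+\frac{x^2+y^2}{B^2}\approx 2\log|k|$, for a total of $5\log|k|$, i.e.\ only $|m(i\lambda_k^+)|\gtrsim |k|^{-5}e^{-\sqrt2\pi\sqrt{|k|}}$, which is weaker than \eqref{D33}; the ``sharpening near the real axis'' you invoke to recover $|k|^{-3}$ is not carried out and is in fact not needed. With the correct $y\approx -2$, Proposition~\ref{prop4} gives directly $\log^+\frac{|x|}{|y|}=\log|k|+O(1)$, $\log^+\frac{x^2+y^2}{B^2}=2\log|k|+O(1)$, and $a\pi|y|\ge 0$, so $\tilde U(i\lambda_k^+-i)\ge -\sqrt2\pi\sqrt{|k|}-3\log|k|-C$, which is exactly \eqref{D33} in one step — this is the paper's argument. (There is an analogous but harmless sign slip in your computation of $i\lambda_k^- - i$, which should be $-k - ik^2 + O(k^{-2})$; it does not affect \eqref{D34} since only $|x|$ and $|y|$ enter Proposition~\ref{prop4}, and your observation that $\sqrt2+1<2\sqrt2$ absorbs the lower-order terms is correct.)
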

\begin{proof}
\eqref{D32} follows at once from \eqref{D30} (with $y= - 1$). We infer from \eqref{ZZ1} that for $k\in \Zs$
\be
\text{Im } (i\lambda _k^\pm ) \le -\frac{1}{2}.
\label{FF1}
\ee
It follows then  from \eqref{A16} and \eqref{D30} that
\[
|m(i\lambda _k^+) | =\exp \tilde U ( - k- 2i (1+O (k^{-2}))) \ge C |k|^{-3} e^{-\sqrt{2} \pi \sqrt{|k|} } \quad (k\ne 0).
\]
Finally, from \eqref{A17} and \eqref{D30}, we infer  that
\begin{eqnarray*}
|m (i \lambda _k^-) | &=& \exp \tilde U (-k-i(k^2 +O(k^{-2})) ) \\
&\ge & C\exp (-\sqrt{2} \pi \sqrt{|k|}  +a\pi k^2 -(\sqrt{2} + 1) \pi |k| -4\log |k| ) \\
&\ge & C\exp (a\pi k^2 -2\sqrt{2} \pi  |k|).
\end{eqnarray*}
\end{proof}
We are in  a position to define the functions in the biorthogonal family. Pick first any
$k\in \Z \setminus \{0,\pm 2\}$, and set
\[
I_k^{\pm} (z) = \frac{P(z)}{P'(i\lambda _k^\pm )(z - i\lambda _k^\pm)} \cdot \frac{m(z)}{m(i\lambda _k^\pm )}\cdot
\frac{ (1-\frac{z}{i\lambda _2})(1-\frac{z}{i\lambda _{-2}})}{(1-\frac{\lambda _k^\pm}{\lambda _2})(1-\frac{\lambda _k^\pm}{\lambda _{-2}})}\cdot
\]
Clearly, $I_k^\pm$ is an entire function of exponential type at most $\pi (1+a)=T/2$. Furthermore, we have that
\be
\label{G1}
I_k^\pm (i\lambda _l^\pm ) =\delta _k^l\delta _+^-\qquad \forall l\in \Z,
\ee
where $\delta _+^-$ is 1 if the two signs in the l.h.s. are the same, and 0 otherwise.
Moreover,
\be
\label{G2}
(I_k^\pm )' (i \lambda_{\pm 2})=0.
\ee
On the other hand, by \eqref{A16}, \eqref{C13}, \eqref{C14a}, \eqref{D32} and \eqref{D33},  we have that
\[
| I_k^+ (x)| \le C \frac{|k|^4 }{ | x - i\lambda _k^+  |} \le C \frac{|k|^4}{1+|k+x|} \cdot
\]
Thus $I_k^+ \in L^2(\R )$ with
\be
\label{F6}
||I_k^+ ||_{L^2(\R ) } \le C |k|^4.
\ee
Finally, by \eqref{A17}, \eqref{C13}, \eqref{C14b}, \eqref{D32} , and \eqref{D34}, we have that
\[
| I_k^-(x)| \le C \frac{|k|^3}{| x  - i\lambda _k^-  | } e^{-(a+1)\pi k^2 +2\sqrt{2} \pi |k| } \le C\frac{|k|^3}{|x+k|+k^2}
e^{-\frac{T}{2} k^2 +2\sqrt{2}\pi |k|}.
\]
Thus
\be
\label{F7}
|| I_k^- ||_{L^2(\R )} \le C |k|^2 e^{-\frac{T}{2} k^2 +2\sqrt{2}\pi |k|}.
\ee

It remains to introduce the functions $I_0(z), I_2(z),I_{-2}(z), \tilde I_2(z)$, and ${\tilde I}_{-2}(z)$. We set
\begin{eqnarray*}
I_0(z) &=& \frac{P(z)}{P'(0)z} \cdot \frac{m(z)}{m(0)}\cdot
(1-\frac{z}{i\lambda _2})(1-\frac{z}{i\lambda _{-2}} ),\\
\tilde I_2(z) &=& -i \frac{P(z)}{P'(i\lambda _2)}\cdot \frac{m(z)}{m(i\lambda _2)}\cdot \frac{1-\frac{z}{i\lambda _{-2}}}{1-\frac{\lambda _2}{\lambda_{-2}}},\\
\tilde I_{-2}(z) &=& -i \frac{P(z)}{P'(i\lambda _{-2} )}\cdot \frac{m(z)}{m(i\lambda _{-2})}\cdot
\frac{1-\frac{z}{i\lambda _{2}} }{1-\frac{\lambda _{-2}}{\lambda_2}},\\
K_2(z) &=&i \frac{\tilde I_2 (z) }{ z - i\lambda _2 },\quad I_2(z)=K_2(z) - iK_2'(i\lambda _2)\tilde I_2 (z),\\
K_{-2}(z) &=&i \frac{\tilde I_{-2} (z) }{ z   - i\lambda _{-2} },\quad I_{-2}(z)=K_{-2}(z) - iK_2'(i\lambda _{-2})\tilde I_{-2} (z).
\end{eqnarray*}

Then we have that
\begin{eqnarray}
&& I_0(0)=1,\quad  I_0(i\lambda _k^\pm ) =0 \quad k\in \Zs, \quad I_0' (i\lambda _{\pm 2} )=0, \label{F1}\\
&&\tilde I_2 (i\lambda _k^\pm )=0\quad k\in \Z, \quad  \tilde I_2 '(i\lambda _2) =-i,\quad \tilde I_2 '(i\lambda _{-2})=0,\label{F2}\\
&&\tilde I_{-2}(i\lambda _k^\pm )=0 \quad k\in \Z,\quad  \tilde I_{-2}'(i\lambda _{-2})=-i,\quad \tilde I_{-2}'(i\lambda _2)=0,\label{F3} \\
&&I_2(i\lambda _k^\pm) =0\quad  k\in\Z \setminus \{ 2 \},\quad I_2 (i\lambda _2 ) =1,\quad I_2'(i\lambda _{\pm 2}) = 0,\label{F4}\\
&&I_{-2}(i\lambda _k^\pm )=0\quad k\in \Z\setminus \{ -2 \},\quad  I_{-2}(i\lambda _{-2}) =1,\quad I_{-2}'(i\lambda _{\pm 2})=0.\label{F5}
\end{eqnarray}
Moreover, $I_0,\ \tilde I_2,\ \tilde I_{-2},\ I_2$, and $I_{-2}$ are entire functions of exponential type at most $\pi (1+a )$
and they belong all to $L^2(\R )$.

Let $\psi _k^\pm$, $\psi _k$, and $\tilde \psi _k$  denote the inverse Fourier transform of $I_k^\pm$, $I_k$, and $\tilde I_k$ for
$k\in \Z\setminus \{ 0,\pm 2\}$, $k\in \{0,\pm 2\}$ and $k\in \{ \pm 2\}$, respectively.
Then, by Paley-Wiener theorem, the functions $\psi_k^\pm$, $\psi_k$ and $\tilde \psi_k$ belong to $L^2(\R)$, and are supported in $[-T/2,T/2]$.
On the other hand, if $I(z)=\hat \psi (z)=\int_{-\infty}^\infty \psi (t) e^{-itz}dt$ with $\psi\in L^2(\R),\ \text{supp } \psi \subset [-T/2,T/2]$, then
\[
\int_{-\frac{T}{2}}^{\frac{T}{2}} \psi (t) e^{\lambda t} dt =  I(i\lambda )\quad \text{ and }  -i \int_{-\frac{T}{2}}^{\frac{T}{2}} t  \psi (t) e^{\lambda t }   dt =  I'(i\lambda ).
\]
Thus \eqref{E1}-\eqref{E5} follow from \eqref{G1}-\eqref{G2} and \eqref{F1}-\eqref{F5}, while \eqref{E6}-\eqref{E7} follow  from \eqref{F6}-\eqref{F7}.
The proof of Proposition \ref{prop1} is complete.

\section{Concluding remark}

In this paper, the equation  $y_{tt}-y_{xx}-y_{txx} = b(x-u(t))h(t)$ is proved to be null controllable on the torus (i.e. with periodic boundary conditions) when the support of the scalar control $h(t)$ moves at a constant velocity $c$  ($u(t)=ct$). What happens for a domain with boundary?
 More precisely, we may wonder under which assumptions on the initial conditions, the control time $T$, the support of the controller $b$  and its pulsations $\omega$
 the null controllability of the system
\begin{eqnarray*}
&&y_{tt}-y_{xx}-y_{txx} = b(x-\cos(\omega t))h(t), \qquad x\in (-1,1),~  t\in(0,T),\\
&&y(-1,t)=y(1,t)=0,\qquad t\in(0,T)
\end{eqnarray*}
holds.
\section*{Acknowledgements}

LR was  partially supported by the Agence Nationale de la Recherche, Project CISIFS,
grant ANR-09-BLAN-0213-02.

\end{document}